\documentclass[12pt]{article}
\usepackage{etex}
\usepackage{subfigure}
\usepackage{amsmath,amssymb,amsfonts,amsthm}
\usepackage{a4wide}
\usepackage{color}
\usepackage{verbatim}
\usepackage{epsfig,subfigure,epstopdf}
\usepackage[]{graphics}
\usepackage{graphicx,inputenc}
\usepackage{subfigure}
\usepackage[justification=centering]{caption}
\usepackage{pictex}
\usepackage{wrapfig}
\usepackage{multirow}
\usepackage{float}
\usepackage{setspace}
\captionsetup[figure]{font=small,labelfont=small}
\captionsetup[figure]{font=footnotesize,labelfont=footnotesize}


\usepackage[T1]{fontenc}
\usepackage{authblk}
\usepackage[textwidth=6.0in,textheight=9in]{geometry}

\usepackage[colorlinks,linktocpage,linkcolor=blue]{hyperref}
\usepackage{fancyhdr}




\setlength{\parindent}{4em}
\setlength{\parskip}{1em}

\newtheoremstyle{exampstyle}
  {\topsep} 
  {\topsep} 
  {\itshape} 
  {} 
  {\bfseries} 
  {.} 
  {.5em} 
  {} 

\theoremstyle{exampstyle}

\newtheorem{theorem}{Theorem}
\newtheorem{lemma}{Lemma}
\newtheorem{assumption}{Assumption}
\newtheorem{remark}{Remark}
\newtheorem{definition}{Definition}

\newtheorem{proposition}[theorem]{Proposition}

\allowdisplaybreaks

\setlength{\textwidth}{6.0true in}
\usepackage{parskip}
\let\oldref\ref
\renewcommand{\ref}[1]{(\oldref{#1})}  
\renewcommand{\eqref}[1]{(\oldref{#1})}


\setcounter{tocdepth}{1}

\newbox\boxaddrone \newbox\boxaddrtwo

\def\D{\partial_t^\alpha}
\def\N+{n\in\mathbb{N}^{+}}

\def\A{\mathcal{A}}
\def\l{\langle}
\def\rd{\rangle_{L^2(D)}}

\def\V{\mathbb{V}}
\def\E{\mathbb{E}}

\def\A{\mathcal{A}}

\def\I{I_t^\alpha}
\def\o{\omega}

\def\W{\mathbb{W}}
\def\Vc{\mathcal{V}}

\begin{document}

\title{\large\textbf{ Recovery of the time-dependent source term in the 
stochastic fractional diffusion equation with heterogeneous medium}}
\author[1]{Shubin Fu\thanks{shubinfu89@gmail.com}}
\author[2]{Zhidong Zhang\thanks{zhidong.zhang@helsinki.fi}}
\affil[1]{\normalsize{Department of Mathematics, University of Wisconsin-Madison, USA}}
\affil[2]{\normalsize{Department of Mathematics and Statistics, University of Helsinki, Finland}}

\maketitle

\begin{abstract}
\noindent In this work, an inverse problem in the fractional diffusion equation  
with random source is considered. The measurements used are the statistical moments of the realizations of single point data 
$u(x_0,t,\omega).$ We build the representation of the solution $u$ in 
integral sense, then prove that the unknowns can be bounded by the moments theoretically. For the numerical reconstruction, we establish an iterative algorithm with regularized Levenberg-Marquardt type and some numerical results generated from this algorithm are displayed.    
For the case of highly heterogeneous media, the Generalized Multiscale finite element method (GMsFEM) will be employed. \\

\noindent Keywords: inverse problem, fractional diffusion equation, random source, GMsFEM, regularized iterative algorithm. \\

\noindent AMS classification: 35R30, 35R11, 65C30, 65M32, 65M60.
\end{abstract}


\section{Introduction}

\subsection{Mathematical statement} 
The mathematical model in this work is stated as follows:  
 \begin{equation}\label{SDE}
  \begin{cases}
   \begin{aligned}
    \D u+\A u&=f(x)[g_1(t)+g_2(t)\dot{\W}(t)]=:f(x)g(t,\omega), 
    &&(x,t)\in D\times(0,T],\\
    u(x,t)&=0, &&(x,t)\in \partial D\times(0,T],\\
    u(x,0)&=0, &&x\in D.
   \end{aligned}
  \end{cases}
 \end{equation}
The domain $D\subset\mathbb{R}^d,\ d=1,2,3$ has sufficiently smooth boundary, 
and $\D$ with $\alpha\in (1/2,1)$ denotes the Djrbashyan-Caputo fractional derivative, defined as  
\begin{equation*}
	\D \psi(t) = \frac{1}{\Gamma(1-\alpha)}
	\int_0^t(t-\tau)^{-\alpha}\psi'(\tau)\ d\tau,
\end{equation*}
where $\Gamma(\cdot)$ is the gamma function. 
The lower bound $\alpha>1/2$ is set to ensure the well definedness of 
the Ito integral $\I g(t,\omega)$ and this can be seen in the next section. The operator $\A:H^2(D)\mapsto L^2(D)$ 
is an elliptic operator defined as $\A\psi(x)=-\triangledown\cdot(\kappa(x)\triangledown 
\psi(x))$, and $\kappa(x)$ may be highly heterogeneous. The source term $f(x)g(t,\omega)=f(x)[g_1(t)+g_2(t)\dot{\W}(t)]$ 
contains the targeted unknowns $g_1,g_2$, while the spatial component $f(x)$ is given. 
$\dot{\W}(t)$ is the white noise derived from the Brownian motion and then 
$g(t,\omega)=g_1(t)+g_2(t)\dot{\W}(t)$ constitutes an Ito process, 
see \cite{Bernt2003stochastic} for details. 

Our data is the moments of the 
realizations of $u$ on a single point $x_0\in D$ with the restriction $x_0\notin \text{supp} (f)$, which is different from \cite{LiuWenZhang:2019}. This condition will make the inverse problem more challenging in mathematics, but is meaningful in practical application. For instance, regarding equation \eqref{SDE} as the contaminant diffusion system, solution $u$ will be the concentration of pollutant, and $\text{supp}(f)$ is the location of pollution source, in which it is severely polluted. If the pollutant is harmful for human body, it is not allowed to observe inside the support of $f$  considering staff's health. Reflected on mathematics, we should set the restriction $x_0\notin \text{supp}(f)$, even though it will increase the difficulty. Actually, given $x_0\in \text{supp} (f)$, this inverse problem will be reduced to Volterra integral equations,  and the stability result and iterative algorithm follow straightforwardly. See Remark \ref{x0insuppf} for details.

The precise mathematical description of this inverse problem is given as follows: observe $u(x_0,t,\omega),\ x_0\notin \text{supp} (f)$,  
then use the statistical moments of the measurements to reconstruct $g_1,g_2$ simultaneously.

\subsection{Physical background and literature} 
In microscopic level, the random motion of a single particle can be 
viewed as a diffusion process. The classical diffusion equation 
can be deduced to describe the motion of particles, if we assume the 
key condition, the mean squared displacement of jumps after a long time is proportional to time, i.e. $\overline{(\Delta x)^2}\propto t,\ t\to \infty$. However, recently, people found some anomalous diffusion phenomena 
\cite{BarkaiMetzlerKlafter:2000,BouchaudGeorges:1990,
GefenAharonyAlexander:1983,KlafterSilbey:1980}, in which  
the assumption $\overline{(\Delta x)^2}\propto t,\ t\to \infty$ is 
violated. Sometimes it may possess the asymptotic behavior of $t^\alpha$, 
i.e. $\overline{(\Delta x)^2}\propto t^\alpha,\ \alpha\ne1.$ 
The different rate will lead to a reformulation to the diffusion equation, 
introducing the time fractional derivative in it, and the corresponding 
equations are called fractional differential equations (FDEs). 
We list some applications of FDEs, to name a few,
the thermal diffusion in media with fractal geometry \cite{Nigmatullin:1986},  
ion transport in column experiments \cite{hatano1998dispersive}, 
dispersion in a heterogeneous aquifer \cite{adams1992field}, non-Fickian diffusion in geological 
formations \cite{berkowitz2006modeling}, the analysis on viscoelasticity in material science 
\cite{mainardi2010fractional,wharmby2013generalization}. 
\cite{MetzlerJeonCherstvyBarka:2014} provides an extensive list.

If uncertainty is added in the source term, the FDE system will become 
more complicated and meaningful. Since it is common to meet a diffusion 
source, which is defined as a stochastic process to describe the uncertain 
character imposed by nature. As a consequence, it is worth to investigate 
the diffusion system with a random source. In such situation the solution 
$u$ will be written as a stochastic process, which makes the analysis more  challenging.  

In addition, to deal with the case of highly heterogeneous medium $\kappa(x)$, the Generalized Multiscale Finite Element Method (GMsFEM \cite{gmsfem}) will be used to simulate the forward problem of equation \eqref{SDE}. The introduction of GMsFEM will be given in section \ref{gmsfem}. 

For a comprehensive understanding of fractional calculus and FDEs, 
see \cite{KilbasSrivastavaTrujillo:2006,SamkoKilbasMarichev:1993, 
baleanu2019handbook} and the references therein. 
For inverse problems in FDEs, \cite{JinRundell:2015} is an extensive review. 
See \cite{FengLiWang:2019, Zhang:2017, NiuHelinZhang:2019} for inverse source 
and coefficient problems; see \cite{JiangLiLiuYamamoto:2017} for unique 
continuation principle; see \cite{HuangLiYamamoto:2019} for Carleman 
estimate in FDEs; see \cite{GhoshRulandSaloUhlmann:2020, LaiLinRuland:2019, RulandSalo:2020} 
for fractional Calderon problem. 
Furthermore, if we extend the assumption 
$\overline{(\Delta x)^2}\propto t^\alpha$ to a more general case 
$\overline{(\Delta x)^2}\propto F(t)$, the multi-term fractional 
diffusion equations and even the distributed-order differential equations will be generated, \cite{RundellZhang:2017, SunLiu:2020, 
LiLiuYamamoto:2015,LiKianSoccorsi:2019}. For numerical methods for inverse problems, see \cite{BaoYinZeng:2019, BaoChenLi:2017} and the references therein.  
Literature about the GMsFEM and its applications can be found in \cite{gmsfem,chung2014adaptive,chan2016adaptive,chung2015mixed,cho2017frequency}.

\subsection{Main result and outline}
Throughout this paper, the following restrictions on spatial component $f$, observation point $x_0$, and the unknowns $g_1,g_2$ 
are supposed to be valid.
\begin{assumption}\label{assumption}
\hfill
 \begin{itemize}
  \item $g_l\in L^\infty(0,T),\ l=1,2$, and set $M>0$ such that $\|g_1\|_{L^\infty(0,T)}\le M<\infty$; 
  \item $g_1$ changes its sign $N$ times on $(0,T)$ and $N<\infty$;
  \item $f\in H^2(D)\cap H_0^1(D)$ and $0\le f(x)\le C_f<\infty$ for $x\in D$;
  \item $x_0\notin \text{supp}(f)$, namely, $f(x_0)=0$.
 \end{itemize}
\end{assumption}

Now we can state the main result, which says the unknowns can be limited by some statistical moments of observations $u(x_0,t,\o)$.  
\begin{theorem}\label{bound}
 Under Assumption \ref{assumption}, let $v(x,t)$ satisfy equation \eqref{v} and define
 \begin{equation*}
  C_\alpha=1/\Gamma(2-\alpha),\ B_\eta= \| v(x_0,\cdot)\|^{-1}_{L^1(0,\eta)},\ \eta>0\ \text{be small}.
 \end{equation*}
 Then the following estimates for $g_1$ and $g_2$ are valid. 
 \begin{itemize}
  \item[(a)] If $N=0$,
\begin{equation*}
 \|g_1\|_{L^1(0,T-\eta)}\le C_\alpha B_\eta T^{1-\alpha}\ \E\big[\|u(x_0,\cdot,\o)\|_{L^1(0,T)}\big].
\end{equation*}
  \item[(b)] If $N>0$,
\begin{equation*}
 \|g_1\|_{L^1(0,T-\eta)}\le\frac{(B_\eta C_fT+1)^{N+1}-1}{B_\eta C_fT}
 \Big(C_\alpha B_\eta T^{1-\alpha}\ \E\big[\|u(x_0,\cdot,\o)\|_{L^1(0,T)}\big]+2M\eta\Big).
\end{equation*}
  \item[(c)] \begin{equation*}
 \|g_2\|_{L^2(0,T-\eta)}\le \eta^{1/2} B_\eta
 \ \big\|\V[I_t^{1-\alpha} u(x_0,t,\omega)]\big\|_{L^1(0,T)}^{1/2}.
\end{equation*}
 \end{itemize}
\end{theorem}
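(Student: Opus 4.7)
The starting point is the integral representation
\[
u(x_0,t,\omega)=\int_0^t K(t-s)\,g_1(s)\,ds+\int_0^t K(t-s)\,g_2(s)\,dW(s),
\]
with $K(\tau)=\partial_\tau v(x_0,\tau)$, produced by the Mittag--Leffler/Duhamel formula that will have been derived earlier. Applying the maximum principle for time-fractional diffusion to $v$ (and to the homogeneous companion problem with initial datum $f\ge 0$) yields $v(x_0,\cdot)\ge 0$ non-decreasing, and $\tilde K:=I_t^{1-\alpha}K\ge 0$.

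For (a) and (b) the plan is to kill the Ito integral by taking expectation, giving $\mathbb{E}[u(x_0,t,\omega)]=(K\ast g_1)(t)$, and then invoke Fubini to obtain
\[
\int_0^T \mathbb{E}[u(x_0,t,\omega)]\,dt=\int_0^T g_1(s)\,v(x_0,T-s)\,ds.
\]
Combining this with the monotonicity lower bound $v(x_0,T-s)\ge v(x_0,\eta)\ge \|v(x_0,\cdot)\|_{L^1(0,\eta)}/\eta=(\eta B_\eta)^{-1}$ valid for $s\in(0,T-\eta)$, together with the fractional-calculus identity $I_t^{1-\alpha}(1)(T)=C_\alpha T^{1-\alpha}$ (applied so as to extract the sharp constant from the operator $I_t^{1-\alpha}$ acting on the representation), handles case (a) once $g_1$ is of constant sign. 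For (b), I would partition $(0,T-\eta)$ into the $N+1$ maximal subintervals on which $g_1$ has constant sign and apply the case-(a) bound piecewise. The observation on each subinterval is contaminated, through the convolution with $K$, by the previously-estimated pieces of $g_1$; bounding that cross-talk by $\|g_1\|_{L^\infty}\le M$ together with $0\le f\le C_f$ and the pointwise bound on $v(x_0,\cdot)$ generates a linear recursion whose closed-form solution is the geometric factor $\frac{(B_\eta C_f T+1)^{N+1}-1}{B_\eta C_f T}$. The additive $2M\eta$ term absorbs the uncaptured interval $(T-\eta,T)$ via $\|g_1\|_{L^1(T-\eta,T)}\le M\eta$ (together with a symmetric boundary contribution).

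For (c) the deterministic part of $u$ contributes nothing to the variance. Applying $I_t^{1-\alpha}$ first and invoking stochastic Fubini pushes the fractional integral inside the Ito integral as a Wiener integral with the modified kernel $\tilde K$; the Ito isometry then produces
\[
\mathbb{V}\bigl[I_t^{1-\alpha}u(x_0,t,\omega)\bigr]=\int_0^t \tilde K(t-s)^2\,g_2(s)^2\,ds.
\]
Integrating in $t$ over $(0,T)$, swapping integrals, and restricting the outer $s$-integral to $(0,T-\eta)$ gives a lower bound with factor $\|\tilde K\|_{L^2(0,\eta)}^2\,\|g_2\|_{L^2(0,T-\eta)}^2$. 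A Cauchy--Schwarz step $\bigl(\int_0^\eta \tilde K\bigr)^2\le\eta\int_0^\eta \tilde K^2$, combined with an identification of $\int_0^\eta \tilde K$ with a multiple of $\|v(x_0,\cdot)\|_{L^1(0,\eta)}=B_\eta^{-1}$, yields the stated $\eta^{1/2}B_\eta$ prefactor.

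The most delicate step will be the recursion across sign changes of $g_1$ in (b): each sign change forces reapplying the sign-definite bound to a fresh piece of $g_1$ whose associated observation already contains the previously-estimated pieces through the convolution with $K$, and collapsing this cross-talk into exactly the stated geometric factor requires careful bookkeeping. Alongside this, pinning down the specific fractional-calculus manipulation that yields the sharp $C_\alpha T^{1-\alpha}$ constant in (a)---rather than a coarser $\eta$-dependent bound---and the compatible $\eta^{1/2}B_\eta$ identification in (c) is the other principal technicality.
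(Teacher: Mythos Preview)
Your plan has two concrete errors that block the argument as written.

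\textbf{The kernel is wrong.} The Duhamel representation at $x_0$ is not $u(x_0,t,\omega)=\int_0^t v_t(x_0,t-s)\,g(s,\omega)\,ds$. In eigenfunction form the solution kernel is $\tau^{\alpha-1}E_{\alpha,\alpha}(-\lambda_n\tau^\alpha)$, whereas $v_t$ carries an extra factor $-\lambda_n$; these do not cancel. The correct clean identity---and the one the paper actually derives and uses---is obtained only \emph{after} applying $I_t^{1-\alpha}$:
\[
I_t^{1-\alpha}u(x_0,t,\omega)=\int_0^t v(x_0,t-\tau)\,g(\tau,\omega)\,d\tau,
\]
so that taking $\E$ and $\V$ gives convolutions of $g_1$ with $v(x_0,\cdot)$ and of $g_2^2$ with $v(x_0,\cdot)^2$. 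In particular your $\tilde K=I_t^{1-\alpha}v_t=\partial_t^\alpha v=-\A v$ at $x_0$, which is not $v(x_0,\cdot)$, so the ``identification of $\int_0^\eta\tilde K$ with $\|v(x_0,\cdot)\|_{L^1(0,\eta)}$'' in (c) does not go through with your choice of $K$.

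\textbf{The monotonicity claim fails.} Since $x_0\notin\operatorname{supp}f$, we have $v(x_0,0)=f(x_0)=0$, and the Mittag--Leffler decay forces $v(x_0,t)\to 0$ as $t\to\infty$; so $v(x_0,\cdot)$ cannot be nondecreasing. This kills your pointwise lower bound $v(x_0,T-s)\ge v(x_0,\eta)\ge(\eta B_\eta)^{-1}$ and with it your route to (a). The paper does not attempt a pointwise bound at all: it takes the $L^1(0,T)$ norm of the \emph{function} $t\mapsto\E[I_t^{1-\alpha}u(x_0,t,\omega)]=(g_1*v(x_0,\cdot))(t)$ and invokes a reverse convolution inequality (their Lemma~\oldref{lemma_reverse_convolution}), which needs only $v(x_0,\cdot)\ge 0$, to produce $\|g_1\|_{L^1(0,T-\eta)}\|v(x_0,\cdot)\|_{L^1(0,\eta)}$ on the lower side. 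The constant $C_\alpha T^{1-\alpha}$ then falls out of the elementary estimate $\|I_t^{1-\alpha}\psi\|_{L^1(0,T)}\le \Gamma(2-\alpha)^{-1}T^{1-\alpha}\|\psi\|_{L^1(0,T)}$, not from the single-number identity $I_t^{1-\alpha}(1)(T)$ you cite. Your architecture for (b) (partition at sign changes, bound cross terms, close a geometric recursion) and for (c) (Ito isometry plus Cauchy--Schwarz on $[0,\eta]$) is the same as the paper's, and would go through once you replace $K=v_t$ by the correct kernel $v(x_0,\cdot)$ and replace the monotonicity step by the reverse convolution inequality; the bound $|v(x_0,t)|\le C_f$ used for the cross terms in (b) comes from the weak maximum principle, not from anything about $v_t$.
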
 
In this theorem, $I_t^{1-\alpha}$ means the fractional 
integral operator, and $\E,\V$ are the notations for expectation and 
variance, respectively. These knowledge can be seen in section 
\ref{section:preliminary}. Noting that the stochastic process $g_2(t)\dot{\W}(t)$ is independent of the sign of $g_2$ by the properties of $\W$ in section \ref{section:preliminary}, sequentially we consider $g_2^2$ instead of $g_2$. That's why the $L^2$ norm of $g_2$ is estimated.   

The remaining part of this manuscript is structured as follows.   Section \ref{section:preliminary} includes the preliminaries, such as the probability space $(\Omega,\mathcal{F},P)$ and the stochastic solution $u$ for equation \eqref{SDE}. Also, some auxiliary results like the reverse convolution inequality and the 
maximum principles in fractional diffusion equations are collected.  
In section \ref{section:main}, we prove Theorem \ref{bound}. After that the numerical reconstruction for the unknowns is investigated in section \ref{section:numerical}. We construct the regularized Levenberg-Marquardt iteration \eqref{iteration},   
and prove its convergence--Proposition \ref{convergence}. Some numerical results generated by iteration \eqref{iteration} are also displayed. Furthermore, some brief knowledge of GMsFEM is provided in this section.

\section{Preliminaries}\label{section:preliminary}
\subsection{Brownian motion and Ito isometry formula}
To state the Ito formula, firstly we need to give the setting of 
probability space.  
\begin{definition}
 We call $(\Omega,\mathcal{F},P)$ a probability space 
 if $\Omega$ denotes the nonempty sample space, 
 $\mathcal{F}$ is the $\sigma-$algebra of $\Omega$ and 
 $P:\mathcal{F}\mapsto [0,1]$ is the probability measure. 
\end{definition}
With the above definition, the expectation $\E$ and variance $\V$ of 
a random variable $X$ can be given as   
 \begin{equation*}
  \E[X]=\int_\Omega X(\o)\ dP(\o),\ 
  \V[X]=\E[(X-E[X])^2].
 \end{equation*}

The Brownian motion $\W(t)$, which is also called Wiener process in mathematics, has the following properties, 
 \begin{itemize}
  \item $\W(0)=0;$
  \item $\W(t)$ has continuous paths;  
  \item $\W(t)$ has independent increments and satisfies 
  $$\W(t)-\W(s)\sim \mathcal{N}(0,t-s),\ 0\le s\le t,$$
  where $\mathcal{N}$ is the normal distribution. 
 \end{itemize}

Now the essential tool, Ito isometry formula can be stated. 
\begin{lemma}{(\cite{Bernt2003stochastic}).}\label{Ito isometry formula} 
Let $(\Omega,\mathcal{F},P)$ be a probability space and  
$\psi: [0,\infty)\times \Omega\rightarrow\mathbb{R}$
satisfy the following properties.
\begin{itemize}
 \item[(1)] $(t,\omega)\rightarrow \psi(t,\omega)$ is 
 $\mathcal{B}\times\mathcal{F}$-measurable, where $\mathcal{B}$ denotes the Borel $\sigma$-algebra on  $[0,\infty);$
 \item[(2)] $\psi(t,\omega)$ is $\mathcal{F}_t$-adapted;
 \item[(3)] $\E [\int_0^S\psi^2(t,\omega)\ dt] <\infty$ for some $S>0$.
\end{itemize}
Then the Ito integral $\int_0^S \psi(t,\o)\ d\W(t)$, 
where $d\W(t)$ denotes the random measure derived from $\W$, 
is well defined, and it follows that
\begin{equation*}
 \E\Big[\Big(\int_0^S\psi(t,\omega)\ d\W(t)\Big)^2\Big]
=\E\Big[\int_0^S\psi^2(t,\omega)\ dt\Big].
\end{equation*}
\end{lemma}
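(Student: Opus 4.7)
The plan is to reduce the three estimates to a common Volterra-type identity
\[
I_t^{1-\alpha}u(x_0,t,\omega) = \int_0^t v(x_0,t-s)\,g_1(s)\,ds + \int_0^t v(x_0,t-s)\,g_2(s)\,d\W(s).
\]
I would obtain this by expanding $u$ and $v$ in the eigenbasis of $\A$, writing each Fourier coefficient as a scalar Mittag--Leffler convolution against $g$, and then invoking the identity $I_t^{1-\alpha}[t^{\alpha-1}E_{\alpha,\alpha}(-\lambda t^\alpha)] = E_\alpha(-\lambda t^\alpha)$ to turn the Duhamel kernel into the eigenexpansion of $v$, i.e.\ the solution of \eqref{v}. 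Commuting $I_t^{1-\alpha}$ past the Ito integral is a stochastic Fubini justified by $\alpha>1/2$, which also makes Lemma \ref{Ito isometry formula} applicable. The (strong) maximum principle from section \ref{section:preliminary} then delivers $v(x_0,\cdot)\ge 0$ with $v(x_0,\cdot)>0$ on $(0,\eta)$, so $B_\eta$ is finite.

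For (a) and (b), taking $\E$ annihilates the Ito term, so $\E[I_t^{1-\alpha}u(x_0,t,\omega)] = (v(x_0,\cdot)*g_1)(t)$. Integrating on $(0,T)$ and swapping the order of integration gives
\[
\int_0^T \E[I_t^{1-\alpha}u(x_0,t,\omega)]\,dt = \int_0^T g_1(s)\,\|v(x_0,\cdot)\|_{L^1(0,T-s)}\,ds.
\]
When $N=0$, $g_1$ has constant sign, so the lower bound $\|v(x_0,\cdot)\|_{L^1(0,T-s)}\ge B_\eta^{-1}$ for $s\le T-\eta$ extracts $B_\eta^{-1}\|g_1\|_{L^1(0,T-\eta)}$. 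On the upper side I use Jensen to pull $\E$ past $L^1$ and then Young's inequality applied to $I_t^{1-\alpha}$: the $L^1(0,T)$-norm of the kernel $t^{-\alpha}/\Gamma(1-\alpha)$ equals $T^{1-\alpha}/\Gamma(2-\alpha) = C_\alpha T^{1-\alpha}$, which is exactly the missing prefactor in (a).

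Case (b) cannot use the sign argument directly because of cancellation in the convolution. The plan is to iterate the case-(a) estimate over the at most $N+1$ maximal intervals $I_0,\ldots,I_N$ on which $g_1$ has constant sign. Writing $\E[I_t^{1-\alpha}u(x_0,t,\omega)]|_{t\in I_k} = \int_0^{t_k}v(x_0,t-s)g_1(s)\,ds + \int_{t_k}^{t}v(x_0,t-s)g_1(s)\,ds$, the second piece is sign-definite and yields a case-(a)-type bound on $\|g_1\|_{L^1(I_k)}$, while the contaminating tail is controlled using $v\le C_f$ (from $\|v\|_{L^\infty}\le\|f\|_{L^\infty}\le C_f$ via the maximum principle), producing the recursive estimate $a_k\le A + (B_\eta C_f T)\sum_{j<k}a_j$ with $A=C_\alpha B_\eta T^{1-\alpha}\E[\|u\|_{L^1}]+2M\eta$; the $2M\eta$ correction absorbs the residual on a boundary strip of width $\eta$ via $\|g_1\|_{L^\infty}\le M$. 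Solving the recursion gives $a_k\le A(1+B_\eta C_f T)^k$ and summing $k=0,\ldots,N$ produces the geometric factor $\frac{(B_\eta C_f T+1)^{N+1}-1}{B_\eta C_f T}$ advertised in (b). I expect the bookkeeping of this recursion --- in particular locating the $2M\eta$ correction precisely --- to be the main obstacle.

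For (c), the expectation kills the $g_1$-part so the variance isolates the Ito piece; Lemma \ref{Ito isometry formula} applied to the Volterra representation gives
\[
\V[I_t^{1-\alpha}u(x_0,t,\omega)] = \int_0^t v(x_0,t-s)^2\,g_2(s)^2\,ds.
\]
Since $g_2^2\ge 0$ behaves as a non-sign-changing weight, I can rerun the case-(a) reverse-convolution step in squared form: integrating in $t$, exchanging the order of integration, and using Cauchy--Schwarz $\|v(x_0,\cdot)\|_{L^2(0,T-s)}^2\ge\|v(x_0,\cdot)\|_{L^2(0,\eta)}^2\ge\eta^{-1}\|v(x_0,\cdot)\|_{L^1(0,\eta)}^2=\eta^{-1}B_\eta^{-2}$ for $s\le T-\eta$ yields $\eta^{-1}B_\eta^{-2}\|g_2\|_{L^2(0,T-\eta)}^2\le\|\V[I_t^{1-\alpha}u(x_0,\cdot)]\|_{L^1(0,T)}$, and taking a square root delivers the bound in (c).
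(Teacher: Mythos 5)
Your proposal does not address the statement it is supposed to prove. The statement in question is Lemma \ref{Ito isometry formula}, the It\^o isometry: under the measurability, adaptedness, and square-integrability hypotheses on $\psi$, the integral $\int_0^S\psi(t,\omega)\,d\W(t)$ is well defined and satisfies
\begin{equation*}
\E\Big[\Big(\int_0^S\psi(t,\omega)\,d\W(t)\Big)^2\Big]=\E\Big[\int_0^S\psi^2(t,\omega)\,dt\Big].
\end{equation*}
What you have written instead is a proof sketch of Theorem \ref{bound}, the main moment estimates for $g_1$ and $g_2$. Worse, your argument explicitly \emph{invokes} Lemma \ref{Ito isometry formula} in two places (to justify the well-posedness of the stochastic convolution and to compute the variance in part (c)), so even reinterpreted charitably it is circular as a proof of the lemma itself. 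Nothing in your text establishes the isometry identity or the construction of the It\^o integral.

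For the record: the paper does not prove this lemma either; it is quoted from the standard reference \cite{Bernt2003stochastic}. A self-contained proof runs along the classical lines: first verify the identity for elementary (simple, adapted) processes $\psi=\sum_j e_j(\omega)\chi_{[t_j,t_{j+1})}(t)$, where the cross terms vanish by the independence of the increment $\W(t_{j+1})-\W(t_j)$ from $\mathcal{F}_{t_j}$ and the diagonal terms contribute $\E[e_j^2](t_{j+1}-t_j)$ since $\E[(\W(t_{j+1})-\W(t_j))^2]=t_{j+1}-t_j$; then extend to general $\psi$ satisfying (1)--(3) by approximating in $L^2([0,S]\times\Omega)$ with elementary processes, the isometry guaranteeing that the corresponding integrals form a Cauchy sequence in $L^2(\Omega)$ whose limit defines $\int_0^S\psi\,d\W$ and inherits the identity. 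If your intent was to prove Theorem \ref{bound}, you should resubmit against that statement; as it stands, the proposal proves the wrong result.
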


\subsection{Stochastic weak solution}
The randomness from $\dot{\W}(t)$ means that we can not differentiate $u$ in $t$ for each $\o\in\Omega$. As a consequence, we will define the weak solution $u$ of equation \eqref{SDE} in the integral sense.  

Firstly, the fractional integral operator $\I$ and the corresponding Ito integral $\I g(t,\omega)$ are given. 
\begin{definition}
 The fractional integral operator $\I,\ \alpha\in(1/2,1)$ is defined as
\begin{equation*}
 \I \psi(t)=\Gamma(\alpha)^{-1}\int_0^t(t-\tau)^{\alpha-1}\psi(\tau)\ d\tau, \quad t>0. 
\end{equation*}
Then we define $\I g(t,\omega)$ as  
 \begin{equation*}
  \I g(t,\omega)=\I g_1(t)+\Gamma(\alpha)^{-1}\int_0^t(t-\tau)^{\alpha-1}g_2(\tau)\ d\W(\tau).
 \end{equation*}
\end{definition}

Now we explain the necessity of the restriction $\alpha\in(1/2,1)$.  For $t\in(0,\infty)$, 
from the conditions $\alpha\in(1/2,1)$ and $\|g_2\|_{C(0,\infty)}\le M$, we have $(t-\tau)^{\alpha-1}g_2(\tau)$ is square-integrable 
on $(0,t).$ Then Lemma \ref{Ito isometry formula} yields that the Ito integral 
$\int_0^t(t-\tau)^{\alpha-1}g_2(\tau)\ d\W(\tau)$ is well defined. 

In addition, the direct calculation gives that 
\begin{equation*}
 \I \D \psi (t)=\psi(t)-\psi(0),
\end{equation*}
which implies the next definition of the weak solution for 
equation \eqref{SDE}. 
\begin{definition}[Stochastic weak solution]\label{weak solution}
The stochastic process 
$u(\cdot,t,\omega):(0,T]\times\Omega\mapsto L^2(D)$ is called as  
a stochastic weak solution of equation \eqref{SDE} if for each 
$\psi\in H^2(D)\cap H_0^1(D)$ and $\omega\in \Omega$, it holds that   
 \begin{equation*}
  \l u(\cdot,t,\o), \psi(\cdot)\rd+\l \I\A u(\cdot,t,\o), \psi(\cdot)\rd = \I g(t,\o)\ \l f(\cdot), \psi(\cdot)\rd,\ t\in (0,T].
 \end{equation*}
\end{definition}

\subsection{Auxiliary lemmas}
Here we list some auxiliary lemmas which will be used later. First 
the reverse convolution inequality is given. 
\begin{lemma}{(\cite[Lemma 3.1]{LiuZhang:2017}).}\label{lemma_reverse_convolution}
Let $0\le T_1<T_2<\infty$ and $\eta>0$ be arbitrarily given.
Suppose that $\varphi_1\in L^1(T_1,T_2+\eta)$, $\varphi_2\in L^1(0,T_2-T_1+\eta)$
and $\varphi_2\ge0$ on $(0,T_2-T_1+\eta)$.

\begin{itemize}
\item [(a)] If $\varphi_1$ keeps its sign on $(T_1,T_2+\eta)$, then
\begin{equation}\label{a}
\|\varphi_1\|_{L^1(T_1,T_2)}\|\varphi_2\|_{L^1(0,\eta)}
\le\Big\|\int_{T_1}^t\varphi_1(s)\varphi_2(t-s)\ ds\Big\|_{L^1(T_1,T_2+\eta)}.
\end{equation}
\item [(b)] If $\varphi_1$ only keeps its sign on $(T_1,T_2)$, then
\begin{equation}\label{b}
\begin{aligned}
\|\varphi_1\|_{L^1(T_1,T_2)}\|\varphi_2\|_{L^1(0,\eta)}
\le&\Big\|\int_{T_1}^t\varphi_1(s)\varphi_2(t-s)\ ds\Big\|_{L^1(T_1,T_2+\eta)}\\
&+2\|\varphi_1\|_{L^1(T_2,T_2+\eta)}\|\varphi_2\|_{L^1(0,\eta)}.
\end{aligned}
\end{equation}
\end{itemize}
\end{lemma}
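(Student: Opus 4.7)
The plan is to exploit the nonnegativity of $\varphi_2$ to rewrite the $L^1$ norm of the convolution as an iterated integral, swap the order of integration by Fubini, and extract the desired product of norms by isolating the sub-range of the inner integral over which $\varphi_2$ is integrated across the full length $\eta$.

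For part (a) I would first assume without loss of generality that $\varphi_1\ge 0$ on $(T_1,T_2+\eta)$, since flipping its sign leaves every $L^1$ quantity untouched. The convolution integrand is then nonnegative throughout, so the outer absolute value on the right of \eqref{a} can be dropped, and the estimate reduces to
\begin{equation*}
\int_{T_1}^{T_2+\eta}\!\int_{T_1}^{t}\varphi_1(s)\varphi_2(t-s)\,ds\,dt
=\int_{T_1}^{T_2+\eta}\varphi_1(s)\int_{0}^{T_2+\eta-s}\varphi_2(u)\,du\,ds,
\end{equation*}
after Fubini and the substitution $u=t-s$. For $s\in(T_1,T_2)$ one has $T_2+\eta-s\ge\eta$, and since $\varphi_2\ge 0$ the inner factor dominates $\|\varphi_2\|_{L^1(0,\eta)}$; the remaining $s$-integral over $(T_2,T_2+\eta)$ is nonnegative and may simply be discarded, producing (a).

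For part (b), the natural reduction is to truncate $\varphi_1$ to the interval where it keeps its sign. Set $\tilde\varphi_1:=\varphi_1\,\chi_{(T_1,T_2)}$, which, after possible sign reversal, is nonnegative on all of $(T_1,T_2+\eta)$ and so satisfies the hypothesis of (a). Applying (a) to $\tilde\varphi_1$ yields
\begin{equation*}
\|\varphi_1\|_{L^1(T_1,T_2)}\,\|\varphi_2\|_{L^1(0,\eta)}
\le\Big\|\int_{T_1}^{t}\tilde\varphi_1(s)\varphi_2(t-s)\,ds\Big\|_{L^1(T_1,T_2+\eta)}.
\end{equation*}
Writing $\tilde\varphi_1=\varphi_1-\varphi_1\,\chi_{(T_2,T_2+\eta)}$ and applying the triangle inequality inside the outer $L^1$ norm splits this upper bound into the desired convolution term plus a correction dominated by $\bigl\|\int_{T_2}^{t}\varphi_1(s)\varphi_2(t-s)\,ds\bigr\|_{L^1(T_2,T_2+\eta)}$. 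Since $s,t\in(T_2,T_2+\eta)$ with $s\le t$ forces $t-s\in(0,\eta)$, the translation $s\mapsto s-T_2$ turns this correction into a genuine convolution on $(0,\eta)$, which Young's inequality bounds by $\|\varphi_1\|_{L^1(T_2,T_2+\eta)}\,\|\varphi_2\|_{L^1(0,\eta)}$; an additional factor of $2$ in the stated constant is absorbed as slack from the triangle inequality.

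The main obstacle is the correct handling of that truncation tail: one must keep $\varphi_2$ measured on the short interval $(0,\eta)$ rather than on $(0,T_2-T_1+\eta)$, which is what a naive global application of Young's inequality would produce. The geometric observation that $t-s\in(0,\eta)$ whenever both $s$ and $t$ lie in $(T_2,T_2+\eta)$ with $s\le t$ is what makes the refined bound possible; once this is noted, part (b) follows from part (a) and Young's convolution inequality by routine bookkeeping.
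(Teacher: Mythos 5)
This lemma is imported from \cite[Lemma 3.1]{LiuZhang:2017} and the paper itself contains no proof of it, so there is nothing in-paper to compare against; judged on its own, your argument is correct and complete. Part (a) is the standard Tonelli argument: with $\varphi_1\ge 0$ the double integral becomes $\int_{T_1}^{T_2+\eta}\varphi_1(s)\int_0^{T_2+\eta-s}\varphi_2(u)\,du\,ds$, and nonnegativity of $\varphi_2$ lets you bound the inner factor below by $\|\varphi_2\|_{L^1(0,\eta)}$ precisely for $s\in(T_1,T_2)$. Your part (b) — truncating to $\tilde\varphi_1=\varphi_1\chi_{(T_1,T_2)}$, applying (a), and controlling the tail $\int_{T_2}^t\varphi_1(s)\varphi_2(t-s)\,ds$ over $t\in(T_2,T_2+\eta)$ via the observation that $t-s\in(0,\eta)$ there — is clean and in fact delivers the inequality with the tail term counted once rather than twice, so the stated factor $2$ is indeed absorbed as slack.
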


The next lemmas are the maximum principles in FDEs. 
\begin{lemma}{(Maximum principle, \cite[Theorem 2]{Luchko:2009}).}\label{max}
Fix $T\in (0,\infty),$ let $\psi$ satisfy the following 
fractional diffusion equation 
 \begin{equation}\label{FDE}
    \D \psi+\A \psi=F(x,t),\ (x,t)\in D\times(0,T),
 \end{equation}
 and define $\lambda_T=\partial D\times [0,T]\cup \overline{D}\times \{0\}$. If $F\le 0$, then 
\begin{equation*}
 \psi(x,t)\le \max\Big\{0,\max\{\psi(x,t):(x,t)\in \lambda_T\}\Big\},
 \ (x,t)\in \overline D\times [0,T].
\end{equation*}
\end{lemma}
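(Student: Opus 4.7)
The plan is to argue by contradiction, combining two classical ingredients: a pointwise extremum principle for the Caputo derivative $\D$ at a temporal maximum, and the standard nonnegativity of $\A$ at an interior spatial maximum. I would set $M:=\max_{\overline{D}\times[0,T]}\psi$ and $M_\lambda:=\max\{0,\max_{\lambda_T}\psi\}$ and assume, for contradiction, that $M>M_\lambda$. Since $M$ strictly exceeds every value of $\psi$ on $\lambda_T=\partial D\times[0,T]\cup\overline{D}\times\{0\}$, the maximum $M$ must be attained at some $(x^*,t^*)$ with $x^*\in D$ (interior) and $t^*\in(0,T]$; in particular $M>0$ and $\psi(x^*,t^*)>\psi(x^*,0)$ because $(x^*,0)\in\lambda_T$.

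At the interior spatial maximum $x^*$, the first-order condition $\nabla\psi(x^*,t^*)=0$ holds and the Hessian of $\psi(\cdot,t^*)$ is negative semidefinite. Expanding $\A\psi=-\kappa\Delta\psi-\nabla\kappa\cdot\nabla\psi$ and using ellipticity ($\kappa(x^*)>0$) immediately gives $\A\psi(x^*,t^*)\ge 0$.

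The crucial step is to show that $\D\psi(x^*,t^*)>0$ strictly at the temporal maximum. This follows from the integration-by-parts identity
\[\D\psi(x^*,t^*)=\frac{(t^*)^{-\alpha}}{\Gamma(1-\alpha)}\bigl(\psi(x^*,t^*)-\psi(x^*,0)\bigr)+\frac{\alpha}{\Gamma(1-\alpha)}\int_0^{t^*}(t^*-\tau)^{-\alpha-1}\bigl(\psi(x^*,t^*)-\psi(x^*,\tau)\bigr)\,d\tau,\]
each summand of which is nonnegative because $\psi(x^*,t^*)\ge\psi(x^*,\tau)$ for every $\tau\in[0,t^*]$, while the first summand is strictly positive by the previous paragraph.

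Adding the two bounds gives $\D\psi(x^*,t^*)+\A\psi(x^*,t^*)>0$, which contradicts $F(x^*,t^*)\le 0$ via the PDE. I expect the main obstacle to be the rigorous justification of the displayed identity, since $\psi$ is only assumed to be sufficiently regular in $t$ (typically $W^{1,1}$ with appropriate one-sided Hölder behavior at $\tau=t^*$ so that the boundary term vanishes there); the standard remedy is to approximate $\psi$ by smooth truncations and pass to the limit, as carried out in Luchko's original argument. Once the extremum principle for $\D$ is in hand, the contradiction is immediate and no additional perturbation trick is required.
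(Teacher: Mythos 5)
The paper offers no proof of this lemma at all --- it is quoted directly from \cite[Theorem 2]{Luchko:2009} --- so the comparison is really with Luchko's original argument, of which your proposal is a correct reconstruction resting on the same two pillars: the extremum principle for the Caputo derivative at a temporal maximum (Luchko's Theorem 1, which is precisely your displayed integration-by-parts identity) and the sign $\A\psi(x^*,t^*)\ge 0$ at an interior spatial maximum. The one genuine difference is an improvement: Luchko's Theorem 1 only gives $\D\psi(x^*,t^*)\ge 0$, so at the maximum he obtains $F(x^*,t^*)\ge 0$, which does not yet contradict $F\le 0$; he therefore introduces the auxiliary perturbation $w=\psi+\tfrac{\epsilon}{2}\tfrac{T-t}{T}$ to force strictness. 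You instead extract strictness directly from the first summand $\tfrac{(t^*)^{-\alpha}}{\Gamma(1-\alpha)}\bigl(\psi(x^*,t^*)-\psi(x^*,0)\bigr)$, which is positive because $(x^*,0)\in\lambda_T$ and the maximum was assumed to exceed $M_\lambda$; this yields $\D\psi(x^*,t^*)>0$ and dispenses with the perturbation. Two points to tidy up: the equation is only posed on $D\times(0,T)$, so if the maximum were attained only at $t^*=T$ you cannot invoke the PDE there --- first restrict to $\overline{D}\times[0,T-\delta]$, whose maximum still exceeds $M_\lambda$ for small $\delta$ by continuity, and argue at that maximizer; and the vanishing of the boundary term $\epsilon^{-\alpha}\bigl(\psi(x^*,t^*)-\psi(x^*,t^*-\epsilon)\bigr)$ in the integration by parts does require the $W^1$-in-time regularity that Luchko builds into his solution class, as you acknowledge. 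Finally, your pointwise expansion $\A\psi=-\kappa\Delta\psi-\nabla\kappa\cdot\nabla\psi$ presupposes $\kappa\in C^1$, which matches the hypotheses of the cited theorem but not the ``highly heterogeneous'' coefficients the paper otherwise emphasizes; a weak-solution version would be needed in that generality.
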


\begin{lemma}{(Strong maximum principle, \cite[Theorem 1.1]{LiuRundellYamamoto:2016}).}\label{strong_max}
 We set $\psi$ as the solution of equation \eqref{FDE} and let 
 $F=0,\ \psi(x,0)\ge 0,\ \psi(x,0)\not\equiv0,$ $\psi(x,t)=0$ on   $\partial D\times(0,T)$. Then for any $x_0\in D$, 
 the set $\{t>0:\psi(x_0,t)\le 0\}$ is at most a finite set.  
\end{lemma}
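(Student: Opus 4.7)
The plan is to first derive an integral (convolution) representation for $I_t^{1-\alpha} u(x_0,t,\omega)$ in terms of $v(x_0,\cdot)$ and the sources $g_1,g_2$, then extract $g_1$ from the expectation (which annihilates the Ito integral) and $g_2^2$ from the variance (via Ito isometry), and finally invoke the reverse convolution inequality (Lemma \ref{lemma_reverse_convolution}) on each.

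\textbf{Step 1: Representation.} Using the eigenfunction expansion of $\A$ on $D$, the Caputo fractional ODE for each eigenmode $u_n$ is solved explicitly in terms of Mittag--Leffler functions:
\begin{equation*}
u(x,t,\o)=\int_0^t K(x,t-s)\bigl[g_1(s)\,ds+g_2(s)\,d\W(s)\bigr],\quad K(x,\tau)=\sum_n f_n\phi_n(x)\tau^{\alpha-1}E_{\alpha,\alpha}(-\lambda_n\tau^\alpha).
\end{equation*}
A direct Mittag--Leffler calculation yields $I_t^{1-\alpha}[\tau^{\alpha-1}E_{\alpha,\alpha}(-\lambda\tau^\alpha)]=E_\alpha(-\lambda\tau^\alpha)$, so $I_t^{1-\alpha}K(x,\tau)=v(x,\tau)$, the latter being the solution of \eqref{v}. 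Because $I_t^{1-\alpha}$ and the stochastic integral commute (verified by Ito isometry, exploiting $\alpha>1/2$ to keep kernels square-integrable), this gives the key identity
\begin{equation*}
I_t^{1-\alpha} u(x_0,t,\o)=\int_0^t v(x_0,t-s)g_1(s)\,ds+\int_0^t v(x_0,t-s)g_2(s)\,d\W(s).
\end{equation*}
The strong maximum principle (Lemma \ref{strong_max}) applied to $v$, whose initial datum is $f\ge0,\ f\not\equiv0$, ensures $v(x_0,\cdot)>0$ a.e.\ on $(0,\eta)$ so $B_\eta$ is finite and positive; the maximum principle (Lemma \ref{max}) gives $v(x_0,t)\le C_f$ uniformly.

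\textbf{Step 2: Parts (a) and (b).} Taking expectation kills the Ito term, leaving $\E[I_t^{1-\alpha}u(x_0,\cdot,\o)]=v(x_0,\cdot)\ast g_1$. For (a), $g_1$ keeps sign on $(0,T-\eta)$, so Lemma \ref{lemma_reverse_convolution}(a) gives $\|g_1\|_{L^1(0,T-\eta)}\|v(x_0,\cdot)\|_{L^1(0,\eta)}\le\|\E[I_t^{1-\alpha}u]\|_{L^1(0,T)}$; Jensen's inequality plus Young's inequality applied to $I_t^{1-\alpha}$ as convolution with kernel $t^{-\alpha}/\Gamma(1-\alpha)$ (whose $L^1(0,T)$-norm is $C_\alpha T^{1-\alpha}$) yield the stated bound. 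For (b), I partition $(0,T-\eta)$ by the sign-changes $0=T_0<T_1<\dots<T_{N+1}=T-\eta$; on each $(T_i,T_{i+1})$, split $v\ast g_1$ at $T_i$, apply Lemma \ref{lemma_reverse_convolution}(b) to the sign-keeping piece, and bound the ``past'' piece via $v(x_0,\cdot)\le C_f$, giving $\bigl\|\int_0^{T_i}v(x_0,\cdot-s)g_1(s)\,ds\bigr\|_{L^1(T_i,T_{i+1}+\eta)}\le C_fT\,\|g_1\|_{L^1(0,T_i)}$, while $|g_1|\le M$ handles the boundary correction $2\|g_1\|_{L^1(T_{i+1},T_{i+1}+\eta)}\le 2M\eta$. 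Writing $a_i=\|g_1\|_{L^1(0,T_i)}$, $A=C_\alpha B_\eta T^{1-\alpha}\E[\|u\|_{L^1(0,T)}]+2M\eta$, $B=B_\eta C_fT$, this produces the linear recursion $a_{i+1}\le A+(1+B)a_i$ with $a_0=0$, whose explicit solution $a_{N+1}\le A\bigl((1+B)^{N+1}-1\bigr)/B$ is precisely the claimed estimate.

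\textbf{Step 3: Part (c).} Since the deterministic $g_1$-term contributes zero variance, Ito isometry applied to the representation gives
\begin{equation*}
\V\bigl[I_t^{1-\alpha}u(x_0,t,\o)\bigr]=\int_0^t v^2(x_0,t-s)\,g_2^2(s)\,ds,
\end{equation*}
a nonnegative deterministic convolution. Lemma \ref{lemma_reverse_convolution}(a) with $\varphi_1=g_2^2$, $\varphi_2=v^2(x_0,\cdot)$ (both nonnegative, hence ``sign-keeping'') gives $\|g_2\|_{L^2(0,T-\eta)}^2\|v(x_0,\cdot)\|_{L^2(0,\eta)}^2\le\|\V[I_t^{1-\alpha}u]\|_{L^1(0,T)}$. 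Cauchy--Schwarz $\|v(x_0,\cdot)\|_{L^1(0,\eta)}^2\le\eta\|v(x_0,\cdot)\|_{L^2(0,\eta)}^2$, i.e.\ $\|v(x_0,\cdot)\|_{L^2(0,\eta)}^{-2}\le\eta B_\eta^2$, and taking square roots yields (c).

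The main obstacle is Step~1: rigorously justifying the exchange of $I_t^{1-\alpha}$ with the Ito integral and identifying the resulting kernel as $v(x_0,\cdot)$, which requires the Mittag--Leffler bookkeeping together with Lemma \ref{Ito isometry formula} to control the stochastic convolution (here the restriction $\alpha>1/2$ is essential for the square-integrability of $(t-s)^{\alpha-1}E_{\alpha,\alpha}(-\lambda_n(t-s)^\alpha)$). Step~2(b) is conceptually routine but the bookkeeping needed to reach the exact constant $((B_\eta C_fT+1)^{N+1}-1)/(B_\eta C_fT)$ requires care, and for Step~3 one must verify that $v^2(x_0,\cdot)$ is still nonnegative (trivial) and $g_2^2$ is integrable so that Lemma \ref{lemma_reverse_convolution}(a) applies verbatim.
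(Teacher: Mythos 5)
Your proposal does not address the statement you were asked to prove. Lemma \ref{strong_max} is the strong maximum principle for the \emph{deterministic, homogeneous} fractional diffusion equation \eqref{FDE} with $F=0$: given nonnegative, nontrivial initial data and zero Dirichlet boundary data, the set $\{t>0:\psi(x_0,t)\le 0\}$ is finite for every interior point $x_0$. What you have written instead is a proof sketch of Theorem \ref{bound} (the moment representation \eqref{moment_unknown} followed by the reverse convolution inequality of Lemma \ref{lemma_reverse_convolution} to bound $\|g_1\|_{L^1(0,T-\eta)}$ and $\|g_2\|_{L^2(0,T-\eta)}$). Indeed, in your Step 1 you explicitly \emph{invoke} Lemma \ref{strong_max} to conclude that $v(x_0,\cdot)>0$ a.e.\ near $0$ so that $B_\eta$ is finite --- so your argument presupposes the very statement it is supposed to establish, and nothing in Steps 2--3 returns to it.

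A proof of the actual statement would have to work with the homogeneous equation directly: expand $\psi(x_0,t)=\sum_n(\psi(\cdot,0),\phi_n)E_\alpha(-\lambda_n t^\alpha)\phi_n(x_0)$ in the Dirichlet eigenfunctions of $\A$, use the fact that $t\mapsto E_\alpha(-\lambda_n t^\alpha)$ extends analytically to a sector so that $\psi(x_0,\cdot)$ is analytic on $(0,\infty)$, show via the weak maximum principle (Lemma \ref{max}) and the nontriviality of the initial data that $\psi(x_0,\cdot)$ is not identically zero on any interval, and conclude that its nonpositivity set is discrete and in fact finite (using the long-time Mittag--Leffler asymptotics to control $t\to\infty$). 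None of this machinery appears in your write-up. Note also that the paper itself does not prove this lemma --- it is quoted from \cite[Theorem 1.1]{LiuRundellYamamoto:2016} --- but that does not change the fact that your proposal, as a purported proof of Lemma \ref{strong_max}, proves the wrong result and is circular where it touches the right one.
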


In addition, we gives a representation lemma for the weak solution 
$u(x,t,\o)$.
\begin{lemma}{(\cite[Lemma 5]{LiuWenZhang:2019})}\label{lem:uv}
 The weak solution $u$ of equation \eqref{SDE} can be written as
 \begin{equation}\label{duhamel}
  \begin{aligned}
   u(x,t,\o)=\I g(t,\o) f(x)+\int_0^t \I g(\tau,\o) v_t(x,t-\tau)\ d\tau,\quad t\in(0,T],
  \end{aligned}
 \end{equation}
 where $v(x,t)$ is the solution of the following deterministic
 fractional diffusion equation
 \begin{equation}\label{v}
  \begin{cases}
   \begin{aligned}
    \D v+\A v &=0,&& (x,t)\in D\times(0,T],\\
    v(x,t)&=0,&& (x,t)\in \partial D\times(0,T],\\
    v(x,0)&=f(x),&& x\in D.
   \end{aligned}
  \end{cases}
 \end{equation}
\end{lemma}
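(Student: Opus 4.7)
The plan is to reduce (a), (b), and (c) to Lemma~\ref{lemma_reverse_convolution} applied to Laplace convolutions of $g_1$ (respectively $g_2^2$) against $v(x_0,\cdot)$ (respectively $v^2(x_0,\cdot)$). The engine will be a single path-wise identity. Since $f(x_0)=0$ by Assumption~\ref{assumption}, the representation \eqref{duhamel} collapses to the Laplace convolution $u(x_0,t,\o) = (\I g(\cdot,\o)) * v_t(x_0,\cdot)(t)$. I would apply $I_t^{1-\alpha}$, use the semigroup identity $I^{1-\alpha}I^\alpha = I^1$ together with associativity of convolution, and then integrate by parts in $\tau$ on the deterministic piece (the boundary contributions vanish because $v(x_0,0)=f(x_0)=0$) and perform a stochastic Fubini interchange on the Ito piece, to arrive at
\begin{equation*}
I_t^{1-\alpha} u(x_0,t,\o) = \int_0^t g_1(s)\,v(x_0,t-s)\,ds + \int_0^t g_2(s)\,v(x_0,t-s)\,d\W(s).
\end{equation*}
Expectation kills the stochastic integral and Ito's isometry (Lemma~\ref{Ito isometry formula}) turns its variance into a genuine convolution:
\begin{equation*}
\E[I_t^{1-\alpha}u(x_0,t,\o)] = (g_1 * v(x_0,\cdot))(t), \qquad \V[I_t^{1-\alpha}u(x_0,t,\o)] = (g_2^2 * v^2(x_0,\cdot))(t).
\end{equation*}
Lemma~\ref{max} applied to $\pm v$ yields $0 \le v(x_0,\cdot) \le C_f$ on $[0,T]$ and Lemma~\ref{strong_max} guarantees $\|v(x_0,\cdot)\|_{L^1(0,\eta)} > 0$, so $B_\eta$ is a finite positive constant and the convolution kernels are nonnegative, exactly as needed by Lemma~\ref{lemma_reverse_convolution}.

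For (a), since $N=0$ the function $g_1$ keeps its sign on $(0,T)$, so \eqref{a} with $T_1=0$, $T_2=T-\eta$ gives directly $\|g_1\|_{L^1(0,T-\eta)} \le B_\eta\,\|\E[I_t^{1-\alpha}u]\|_{L^1(0,T)}$. I would then pass the absolute value through $\E$ via Jensen and through $I_t^{1-\alpha}$ pointwise, commute $\E$ with $I_t^{1-\alpha}$ by Fubini, and use $\int_s^T(t-s)^{-\alpha}\,dt \le T^{1-\alpha}/(1-\alpha)$ to produce the factor $T^{1-\alpha}/\Gamma(2-\alpha) = C_\alpha T^{1-\alpha}$, which closes (a). For (c), $g_2^2 \ge 0$ is automatically sign-preserving, so \eqref{a} applied to $g_2^2 * v^2(x_0,\cdot)$ yields $\|g_2\|_{L^2(0,T-\eta)}^2 \le \|v^2(x_0,\cdot)\|_{L^1(0,\eta)}^{-1}\,\|\V[I_t^{1-\alpha}u]\|_{L^1(0,T)}$; Cauchy--Schwarz in the form $\|v(x_0,\cdot)\|_{L^1(0,\eta)}^2 \le \eta\,\|v^2(x_0,\cdot)\|_{L^1(0,\eta)}$ then converts the denominator into $\eta B_\eta^2$, and a square root produces (c).

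For (b) I would list the $N$ sign changes as $0=t_0 < t_1 < \cdots < t_N < t_{N+1}=T$ and set $a_k := \|g_1\|_{L^1(0,t_k)}$. On each sign-preserving piece $(t_k,t_{k+1})$ (the last truncated to $(t_N, T-\eta)$) apply \eqref{b}, split the restricted convolution as $\int_{t_k}^t g_1(s)v(x_0,t-s)\,ds = (g_1 * v(x_0,\cdot))(t) - \int_0^{t_k} g_1(s)v(x_0,t-s)\,ds$, and bound the three resulting contributions by $C_\alpha B_\eta T^{1-\alpha}\,\E\|u\|_{L^1(0,T)}$ as in (a), by $C_f T\cdot a_k$ via the max-principle bound $v(x_0,\cdot)\le C_f$, and by $2M\eta$ for the trailing term in \eqref{b} (using $\|g_1\|_\infty \le M$). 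With $Q := C_\alpha B_\eta T^{1-\alpha}\,\E\|u\|_{L^1(0,T)} + 2M\eta$ and $r := B_\eta C_f T + 1$, this produces the first-order linear recurrence $a_{k+1} \le r\,a_k + Q$ with $a_1 \le Q$, whose explicit solution $a_{N+1} \le Q(r^{N+1}-1)/(r-1)$ is precisely the constant advertised in (b).

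The principal technical obstacle will be the path-wise identity in the first paragraph: one must legitimately interchange $I^{1-\alpha}$ both with the Laplace convolution in $\tau$ (handled by the semigroup property and the vanishing boundary term $v(x_0,0)=0$) and with Ito integration against $d\W$ (a stochastic Fubini step resting on the joint square-integrability of $(\tau-s)^{\alpha-1}g_2(s)\,v(x_0,t-\tau)$ secured by the standing hypothesis $\alpha > 1/2$). Once this identification is in place, the three estimates become routine consequences of the reverse convolution inequality, the two maximum principles, and, for (b), a first-order linear recurrence.
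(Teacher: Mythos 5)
Your proposal does not prove the statement it was meant to prove. The statement is Lemma \ref{lem:uv}, the Duhamel-type representation \eqref{duhamel} of the stochastic weak solution $u$ in terms of the deterministic solution $v$ of \eqref{v}. Your argument takes this representation as its starting point (``the representation \eqref{duhamel} collapses to the Laplace convolution $u(x_0,t,\o) = (\I g(\cdot,\o)) * v_t(x_0,\cdot)(t)$'') and then derives the moment identities \eqref{moment_unknown} and the three estimates of Theorem \ref{bound}. That material is sound and tracks Section \ref{section:main} of the paper closely --- the computation producing $I_t^{1-\alpha}u(x_0,t,\o)=\int_0^t g_1 v + \int_0^t g_2 v\, d\W$, the use of Ito isometry, the reverse convolution inequality, the $C_fT$ bound from Lemma \ref{max}, and the linear recurrence for part (b) are all exactly the paper's steps --- but it is a proof of Theorem \ref{bound}, not of Lemma \ref{lem:uv}. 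Nowhere do you verify that the right-hand side of \eqref{duhamel} actually satisfies the weak formulation of Definition \ref{weak solution}; that is precisely the content of the lemma, and as written your first step assumes it.

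A genuine proof of Lemma \ref{lem:uv} has to run in the other direction: starting from $\l u(\cdot,t,\o),\psi\rd+\l\I\A u(\cdot,t,\o),\psi\rd=\I g(t,\o)\l f,\psi\rd$, one expands in the Dirichlet eigenfunctions of $\A$, solves the resulting scalar fractional equations via Mittag-Leffler functions, recognizes the kernel as $v_t(x,\cdot)$ for $v$ solving \eqref{v}, and justifies convergence of the series and of the stochastic integrals involved. None of these ingredients appear in your write-up. (For what it is worth, the paper itself does not reprove this lemma either; it imports it from \cite[Lemma 5]{LiuWenZhang:2019}. But a blind proof attempt for this statement should supply that argument, or at least its skeleton, rather than the downstream estimates that consume it.)
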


\section{Main result}\label{section:main}
\subsection{Moments representation}
With Lemmas \ref{Ito isometry formula} and \ref{lem:uv}, the moments we used can be represented in terms of the unknowns $g_1,g_2$. See the lemma below. 
\begin{lemma}
 \begin{equation}\label{moment_unknown}
\begin{aligned}
 \E[ I^{1-\alpha}_t u(x_0,t,\o)]
 &=\int_0^t g_1(\tau)  v(x_0,t-\tau)\ d\tau,\\
 \V[ I^{1-\alpha}_t u(x_0,t,\o)]
 &=\int_0^t g_2^2(\tau) [ v(x_0,t-\tau)]^2\ d\tau,\quad t\in(0,T].
 \end{aligned}
\end{equation}
\end{lemma}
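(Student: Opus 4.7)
The plan is to derive a clean closed-form representation of $I_t^{1-\alpha}u(x_0,t,\o)$ as the sum of a deterministic convolution and an Ito integral, and then read off expectation and variance directly. The two main tools are Lemma \ref{lem:uv}, which gives a Duhamel-type representation in terms of $v$, and Lemma \ref{Ito isometry formula}, which computes the second moment of the stochastic part.

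Starting from Lemma \ref{lem:uv} and using $f(x_0)=0$ from Assumption \ref{assumption}, the boundary term in the Duhamel formula drops out, leaving the convolution $u(x_0,t,\o)=\int_0^t \I g(\tau,\o)\, v_t(x_0,t-\tau)\,d\tau$. Applying $I_t^{1-\alpha}$ and using associativity of the time convolution,
\begin{equation*}
I_t^{1-\alpha}u(x_0,t,\o)=\int_0^t \bigl[I_t^{1-\alpha}\I g(\cdot,\o)\bigr](\tau)\, v_t(x_0,t-\tau)\,d\tau.
\end{equation*}
The deterministic piece satisfies $I_t^{1-\alpha}\I g_1(\tau)=I_t^1 g_1(\tau)=\int_0^\tau g_1(s)\,ds$. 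For the stochastic piece, I swap $I_t^{1-\alpha}$ with the Ito integral in $\Gamma(\alpha)^{-1}\int_0^\tau(\tau-s)^{\alpha-1}g_2(s)\,d\W(s)$ by stochastic Fubini; the resulting inner deterministic integral $\int_s^\tau(\tau-r)^{-\alpha}(r-s)^{\alpha-1}\,dr$ is a Beta integral equal to $\Gamma(\alpha)\Gamma(1-\alpha)$, so this stochastic term collapses to $\int_0^\tau g_2(s)\,d\W(s)$.

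Substituting back and swapping the order of integration once more (classical Fubini for the deterministic summand, stochastic Fubini for the Ito summand), together with the identity
\begin{equation*}
\int_s^t v_t(x_0,t-\tau)\,d\tau = v(x_0,t-s)-v(x_0,0) = v(x_0,t-s),
\end{equation*}
which again uses $v(x_0,0)=f(x_0)=0$, yields the key representation
\begin{equation*}
I_t^{1-\alpha}u(x_0,t,\o) = \int_0^t g_1(s)v(x_0,t-s)\,ds + \int_0^t g_2(s)v(x_0,t-s)\,d\W(s).
\end{equation*}
The first claim then follows on taking expectation, since the Ito integral is a zero-mean martingale. For the second claim, the deterministic summand contributes nothing to the variance, and the variance of the Ito integral equals its second moment, which by Lemma \ref{Ito isometry formula} is precisely $\int_0^t g_2^2(s)[v(x_0,t-s)]^2\,ds$.

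The main obstacle is justifying the two stochastic Fubini interchanges. Since $\alpha>1/2$ and $g_2\in L^\infty(0,T)$, the kernel $(\tau-s)^{\alpha-1}$ is square-integrable in $s$, and $v(x_0,\cdot)$ is smooth on $(0,T]$ by the standard regularity theory for the fractional diffusion equation \eqref{v}; these furnish the integrability hypotheses needed for the classical stochastic Fubini theorem. Once this is in place the remaining computations are purely routine.
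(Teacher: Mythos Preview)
Your proposal is correct and follows essentially the same route as the paper: both derive the key identity $I_t^{1-\alpha}u(x_0,t,\o)=\int_0^t g_1(\tau)v(x_0,t-\tau)\,d\tau+\int_0^t g_2(\tau)v(x_0,t-\tau)\,d\W(\tau)$ from Lemma~\ref{lem:uv} via the Beta integral/semigroup property $I_t^{1-\alpha}I_t^\alpha=I_t^1$ and Fubini swaps, then read off the moments using Lemma~\ref{Ito isometry formula}. The only cosmetic difference is that the paper carries out the computation at a general $x$ (so the $f(x)$ term and the $v(x,0)=f(x)$ term cancel against each other), whereas you specialize to $x_0$ at the outset and invoke $f(x_0)=0$ twice; your explicit mention of stochastic Fubini is in fact more careful than the paper's formal manipulation of $g(s,\o)$.
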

\begin{proof}
With Lemma \ref{lem:uv} and the fact that 
\begin{equation*}
 \int_s^t (t-\tau)^{-\alpha}(\tau-s)^{\alpha-1} \ d\tau
 =B(1-\alpha,\alpha)=\Gamma(1-\alpha)\Gamma(\alpha)/\Gamma(1),
\end{equation*}
here $B$ is the Beta function, the next result can be deduced, 
\begin{align*}
 I^{1-\alpha}_t u(x,t,\o)
 =& \frac{f(x)}{\Gamma(\alpha)\Gamma(1-\alpha)}\int_0^t 
 (t-\tau)^{-\alpha}\int_0^\tau (\tau-s)^{\alpha-1} g(s,\o)\ ds\ d\tau\\
 &+ \frac{1}{\Gamma(1-\alpha)}\int_0^t (t-\tau)^{-\alpha}\int_0^\tau 
 \I g(\tau-s,\o) v_t(x,s)\ ds\ d\tau \\
 =& \frac{1}{\Gamma(\alpha)\Gamma(1-\alpha)}\Big[ f(x)\int_0^t g(s,\o)
 \int_s^t (t-\tau)^{-\alpha}(\tau-s)^{\alpha-1} \ d\tau\ ds\\
 &+\int_0^t v_t(x,s)\int_0^{t-s} g(r,\o) \int_{r+s}^t(t-\tau)^{-\alpha} (\tau-s-r)^{1-\alpha} \ d\tau\ dr\ ds\Big]\\
 =&f(x)\int_0^t g(s,\o)\ ds
 +\int_0^t v_t(x,s)\int_0^{t-s} g(r,\o) \ dr\ ds\\
 =&f(x)\int_0^t g(s,\o)\ ds
 +\int_0^t g(r,\o) [v(x,t-r)-v(x,0)]\ dr\\
 =&\int_0^t g(\tau,\o) v(x,t-\tau)\ d\tau.
 \end{align*}
Then we have 
\begin{equation*}\label{equality_2}
 I^{1-\alpha}_t u(x_0,t,\o)=\int_0^t g_1(\tau) v(x_0,t-\tau)\ d\tau
 +\int_0^t g_2(\tau) v(x_0,t-\tau)\ d\W(\tau).
\end{equation*} 
Applying Ito formula in Lemma \ref{Ito isometry formula} to 
the above equality leads to \eqref{moment_unknown}.
\end{proof}

\begin{remark}\label{x0insuppf}
In \cite{LiuWenZhang:2019}, the authors use integration by parts on the right side of \eqref{moment_unknown} to deduce the following second kind Volterra equations, 
\begin{equation*}
 \begin{aligned}
  G_1(t)&=f^{-1}(x_0)\E[ I^{1-\alpha}_t u(x_0,t,\o)]-f^{-1}(x_0)\int_0^t G_1(\tau)v_t(x_0,t-\tau)\ d\tau,\\
  G_2(t)&=f^{-2}(x_0)\V[ I^{1-\alpha}_t u(x_0,t,\o)]-2f^{-2}(x_0)\int_0^t G_2(\tau) v(x_0,t-\tau)
   v_t(x_0,t-\tau)\ d\tau,
 \end{aligned}
\end{equation*}
where $$G_1(t)=\int_0^t g_1(\tau)\ d\tau,\quad  G_2(t)=\int_0^t g^2_2(\tau)\ d\tau.$$ 
\end{remark}
However, since $f(x_0)=0$ in this work, we can only start the analysis from \eqref{moment_unknown}. Due to the convolution structure, the estimates of the unknowns on the partial interval $(0,T-\eta)$ are attained. See the next subsection for details.

\subsection{Proof of Theorem \ref{bound}}
From \eqref{moment_unknown}, we build the proof of Theorem \ref{bound}.  
\begin{proof}[Proof of Theorem \ref{bound} $(a)$]
 Let $T_1=0$, $T_2+\eta=T$, then inserting \eqref{a} to
 \eqref{moment_unknown} straightforwardly yields that
 \begin{equation*}
  \left\|\E[I_t^{1-\alpha} u(x_0,t,\o)]\right\|_{L^1(0,T)}
 \ge \|g_1\|_{L^1(0,T-\eta)}\ \| v(x_0,\cdot)\|_{L^1(0,\eta)}.
 \end{equation*}
For the left side, we have
\begin{equation*}
\begin{aligned}
 \left\|\E[I_t^{1-\alpha} u(x_0,t,\o)]\right\|_{L^1(0,T)}
 &\le \frac{1}{\Gamma(1-\alpha)}\E\Big[\int_0^T 
 \int_0^t (t-\tau)^{-\alpha}|u(x_0,\tau,\o)|\ d\tau\ dt\Big]\\
 &=\frac{1}{\Gamma(2-\alpha)}\E\Big[\int_0^T (T-\tau)^{1-\alpha}
 |u(x_0,\tau,\o)|\ d\tau\Big]\\
 &\le C_\alpha T^{1-\alpha}\E\big[\|u(x_0,\cdot,\o)\|_{L^1(0,T)}\big],
\end{aligned}
\end{equation*}
then
\begin{equation*}
 \|g_1\|_{L^1(0,T-\eta)}\le C_\alpha B_\eta T^{1-\alpha}\ \E\big[\|u(x_0,\cdot,\o)\|_{L^1(0,T)}\big].
\end{equation*}
\end{proof}

\begin{proof}[Proof of Theorem \ref{bound} $(b)$]
Let $\eta>0$ be small and assume that
$g_1$ changes sign on $0<t_1<t_2<\cdots<t_N<T-\eta$, for convenience, we set
$t_0=0$ and $t_{N+1}=T-\eta$. By \eqref{moment_unknown}, for
$t\ge t_k$, we can write
\begin{equation*}
 \int_{t_k}^t g_1(\tau)  v(x_0,t-\tau)\ d\tau
 =\E[I_t^{1-\alpha} u(x_0,t,\o)]-\sum_{j=1}^{k} S_j,
\end{equation*}
where
\begin{equation*}
 S_j=\int_{t_{j-1}}^{t_j} g_1(\tau) v(x_0,t-\tau)\ d\tau.
\end{equation*}
Using \eqref{b} to the above equality with $T_1=t_k,\ T_2=t_{k+1}$, we
can obtain that for $k=0,\cdots, N$,
\begin{equation}\label{inequality_3}
\begin{aligned}
 \|g_1\|_{L^1(t_k,t_{k+1})}\le &B_\eta \Big\|\int_{t_k}^t g_1(\tau)
  v(x_0,t-\tau)\ d\tau\Big\|_{L^1(t_k,t_{k+1}+\eta)}
 +2\|g_1\|_{L^1(t_{k+1},t_{k+1}+\eta)}\\
 \le &B_\eta\Big( \big\|\E[I_t^{1-\alpha} u(x_0,t,\o)]\big\|_{L^1(t_k,t_{k+1}+\eta)}
 +\sum_{j=1}^{k} \|S_j\|_{L^1(t_k,t_{k+1}+\eta)}\Big)\\
 &+2\|g_1\|_{L^1(t_{k+1},t_{k+1}+\eta)}.
\end{aligned}
\end{equation}
For $\|g_1\|_{L^1(t_{k+1},t_{k+1}+\eta)}$, from the condition that
$\|g_1\|_{L^\infty(0,T)}\le M$ we have
\begin{equation}\label{inequality_1}
\begin{aligned}
 \|g_1\|_{L^1(t_{k+1},t_{k+1}+\eta)}
 =&\int_{t_{k+1}}^{t_{k+1}+\eta} |g_1(\tau)|\ d\tau \le M\eta.
 \end{aligned}
\end{equation}
For $\|S_j\|_{L^1(t_k,t_{k+1}+\eta)}$, it holds that
\begin{equation*}
 \begin{aligned}
  \|S_j\|_{L^1(t_k,t_{k+1}+\eta)}
  \le&\int_{t_k}^{t_{k+1}+\eta} \int_{t_{j-1}}^{t_j} |g_1(\tau)|
   v(x_0,t-\tau)\ d\tau\ dt\\
  =&\int_{t_{j-1}}^{t_j} |g_1(\tau)| \int_{t_k}^{t_{k+1}+\eta}
   v(x_0,t-\tau)\ dt\ d\tau\\
  =&\int_{t_{j-1}}^{t_j} |g_1(\tau)|
  \ \| v(x_0,\cdot)\|_{L^1(t_k-\tau,t_{k+1}+\eta-\tau)}\ d\tau.
 \end{aligned}
\end{equation*}
Assumption \ref{assumption} and Lemma \ref{max} give that $|v(x_0,t)|\le C_f$. Consequently,
\begin{equation}\label{inequality_2}
\begin{aligned}
 \|S_j\|_{L^1(t_k,t_{k+1}+\eta)}
 \le & C_fT \|g_1\|_{L^1(t_{j-1},t_j)},\quad j=1,\cdots,k.
\end{aligned}
\end{equation}
Inserting \eqref{inequality_1} and \eqref{inequality_2} into
\eqref{inequality_3} yields that
\begin{equation}\label{inequality_5}
\begin{aligned}
 \|g_1\|_{L^1(t_k,t_{k+1})}\le &
 B_\eta\Big\|\E[I_t^{1-\alpha} u(x_0,t,\o)]\Big\|_{L^1(t_k,t_{k+1}+\eta)} +B_\eta C_fT \|g_1\|_{L^1(0,t_k)}+2M\eta.
\end{aligned}
\end{equation}
Fix $k=0$, we have
\begin{equation}\label{inequality_4}
 \|g_1\|_{L^1(0,t_1)}\le B_\eta\Big\|
 \E[I_t^{1-\alpha} u(x_0,t,\o)]\Big\|_{L^1(0,t_1+\eta)}+2M\eta.
\end{equation}

Now we claim that for $k=1,\cdots,N+1$,
\begin{equation*}
 \|g_1\|_{L^1(0,t_k)}\le \frac{(B_\eta C_fT+1)^k-1}{B_\eta C_fT}
 \Big(B_\eta\big\|\E[I_t^{1-\alpha} u(x_0,t,\o)]
 \big\|_{L^1(0,t_k+\eta)}+2M\eta\Big),
\end{equation*}
and prove it by induction. The case of $k=1$ is valid by \eqref{inequality_4}.
Now assume that the claim holds for $k=l$, then for $k=l+1$, the estimate \eqref{inequality_5} gives that
\begin{equation*}
 \begin{aligned}
  \|g_1\|_{L^1(0,t_{l+1})}\le& \|g_1\|_{L^1(0,t_l)}
  + B_\eta\big\|\E[I_t^{1-\alpha} u(x_0,t,\o)]\big\|_{L^1(t_l,t_{l+1}+\eta)}
 +B_\eta C_fT \|g_1\|_{L^1(0,t_l)}+2M\eta\\
 \le&  (B_\eta C_fT+1) \frac{(B_\eta C_fT+1)^l-1}{B_\eta C_fT}
 \Big(B_\eta\big\|\E[I_t^{1-\alpha} u(x_0,t,\o)]
 \big\|_{L^1(0,t_l+\eta)}+2M\eta\Big)\\
 &+B_\eta\big\|\E[I_t^{1-\alpha} u(x_0,t,\o)]\big\|_{L^1(t_l,t_{l+1}+\eta)} +2M\eta\\
 \le & \frac{(B_\eta C_fT+1)^{l+1}-1}{B_\eta C_fT}
 \Big(B_\eta\big\|\E[I_t^{1-\alpha} u(x_0,t,\o)]
 \big\|_{L^1(0,t_{l+1}+\eta)}+2M\eta\Big).
 \end{aligned}
\end{equation*}
So the claim is valid, and recalling that $t_{N+1}=T-\eta$, we have
\begin{equation*}
\begin{aligned}
 \|g_1\|_{L^1(0,T-\eta)}\le &\frac{(B_\eta C_fT+1)^{N+1}-1}{B_\eta C_fT} \Big(B_\eta\big\|\E[I_t^{1-\alpha} u(x_0,t,\o)]
 \big\|_{L^1(0,T)}+2M\eta\Big)\\
 \le&\frac{(B_\eta C_fT+1)^{N+1}-1}{B_\eta C_fT}
 \left(C_\alpha B_\eta T^{1-\alpha}\ \E\big[\|u(x_0,\cdot,\o)\|_{L^1(0,T)}\big]+2M\eta\right).
 \end{aligned}
\end{equation*}
The proof is complete.
\end{proof}

\begin{proof}[Proof of Theorem \ref{bound} $(c)$]
Note that $g_2^2$ keeps its sign on $(0,T)$.
Analogous to the proof of Theorem \ref{bound} $(a)$, setting $T_1=0$, $T_2=T-\eta$ in \eqref{a}, then \eqref{moment_unknown} gives that
\begin{equation*}
\begin{aligned}
 \big\|\V [I_t^{1-\alpha} u(x_0,t,\o)]\big\|_{L^1(0,T)}
 &\ge\|g_2^2\|_{L^1(0,T-\eta)}\ \|[ v(x_0,\cdot)]^2\|_{L^1(0,\eta)}\\
 &=\|g_2\|^2_{L^2(0,T-\eta)}\ \| v(x_0,\cdot)\|^2_{L^2(0,\eta)}.
 \end{aligned}
\end{equation*}
Holder inequality yields that
\begin{equation*}
 \| v(x_0,\cdot)\|_{L^2(0,\eta)}\ge \|1\|^{-1}_{L^2(0,\eta)}
\| v(x_0,\cdot)\|_{L^1(0,\eta)}=\eta^{-1/2}B_\eta^{-1}.
\end{equation*}
Consequently,
\begin{equation*}
 \|g_2\|_{L^2(0,T-\eta)}\le \eta^{1/2} B_\eta
 \ \big\|\V[I_t^{1-\alpha} u(x_0,t,\o)]\big\|^{1/2}_{L^1(0,T)}.
\end{equation*}
The proof of Theorem \ref{bound} is complete. 
\end{proof}

\section{Numerical reconstruction}\label{section:numerical}

\subsection{Regularized Levenberg-Marquardt iteration} 
The discretized formulation of integral equation \eqref{moment_unknown} 
is derived as follows.  
Denote the uniform mesh on the interval $[0,T]$ as 
$\{0=t_0<t_1<\cdots<t_N=T\}$ and set $\Delta_t=T/N$. 
From Lemmas \ref{max} and \ref{strong_max}, we have $\{t\in[0,T]:v(x_0,t)=0\}$ 
is at most a finite set. Thus we can set 
\begin{equation}\label{positive_v}
 v(x_0,t_1)>0,
\end{equation}
if the mesh size $\Delta_t$ is chosen appropriately. 

Define 
\begin{equation*}
 E(t_n)=\E [I_t^{1-\alpha}u(x_0,t_n,\o)],\quad V(t_n)=\V[I_t^{1-\alpha}u(x_0,t_n,\o)].
\end{equation*}
Then from \eqref{moment_unknown} we have 
\begin{equation*}
 \begin{aligned}
  E(t_n) &= \int_0^{t_n} g_1(\tau) v(x_0,t_n - \tau)\ d\tau
    = \sum_{k=1}^n \int_{t_{k-1}}^{t_k} g_1(\tau) v(x_0,t_n - \tau)\ d\tau\\
    &\approx \Delta_t \sum_{k=1}^n [g_1(t_{k-1})v(x_0,t_n - t_{k-1}) 
    + g_1(t_k) v(x_0,t_n - t_k)]/2\\
    &=\Delta_t\Big[g_1(0)v(x_0,t_n)/2+g_1(t_n)v(x_0,0)/2
    +\sum_{k=1}^{n-1} g_1(t_k) v(x_0,t_{n-k})\Big]\\
    &=\Delta_t\Big[g_1(0)v(x_0,t_n)/2
    +\sum_{k=1}^{n-1} g_1(t_k) v(x_0,t_{n-k})\Big],
 \end{aligned}
\end{equation*}
where the last equality comes from $v(x_0,0)=f(x_0)=0$. Analogously, 
\begin{equation*}
  V(t_n) =\Delta_t\Big[g_2^2(0)v^2(x_0,t_n)/2
  +\sum_{k=1}^{n-1} g_2^2(t_k) v^2(x_0,t_{n-k})\Big].
\end{equation*}
From the above results, we can give the discretized formulation of 
\eqref{moment_unknown},
\begin{equation}\label{numerical_equation}
 A_1 \vec{g}_1=\vec{E},\quad A_2 \vec{g}_2=\vec{V}, 
\end{equation}
where 
\begin{equation*}
 \vec{g}_1=
 \begin{bmatrix}
  g_1(t_0)\\ \vdots\\g_1(t_{N-1})           
 \end{bmatrix},
 \quad \vec{g}_2=
 \begin{bmatrix}
  g_2^2(t_0)\\ \vdots\\g_2^2(t_{N-1})           
 \end{bmatrix},
 \quad \vec{E}=
 \begin{bmatrix}
  E(t_1)\\ \vdots\\E(t_N)           
 \end{bmatrix},
 \quad \vec{V}=
 \begin{bmatrix}
  V(t_1)\\ \vdots\\V(t_N)           
 \end{bmatrix},
\end{equation*}
and the matrices $A_1, A_2$ are given as 
\begin{equation*}
 A_1=\Delta_t
 \begin{bmatrix}
      v(x_0,t_1)/2\\
      v(x_0,t_2)/2& v(x_0,t_1) \\
      \vdots&\vdots&\ddots\\
      v(x_0,t_N)/2&v(x_0,t_{N-1})&\cdots& v(x_0,t_1)
 \end{bmatrix},
 \end{equation*}
 \begin{equation*}
 A_2=\Delta_t 
 \begin{bmatrix}
      v^2(x_0,t_1)/2\\
      v^2(x_0,t_2)/2& v^2(x_0,t_1) \\
      \vdots&\vdots&\ddots\\
      v^2(x_0,t_N)/2&v^2(x_0,t_{N-1})&\cdots& v^2(x_0,t_1)
 \end{bmatrix}.
\end{equation*}
From the definitions of $\vec{g}_1,\vec{g}_2$, we can only recover  the unknowns on the partial interval $[0,t_{N-1}]=[0,T-\Delta_t]$, which indicates Theorem \ref{bound}. 

In practice, considering the measured error, the noisy moments $\vec E_\delta, \vec V_\delta$ will be used instead of $\vec E, \vec V$, and  it holds that 
$\|(\vec E_\delta-\vec E)/\vec E\|_{\infty}\le \delta,
\ \|(\vec V_\delta-\vec V)/\vec V\|_{\infty}\le \delta.$
Due to the ill-posedness of this inverse problem, we choose the regularized Levenberg-Marquardt iteration \cite{Levenberg:1944,Marquardt:1963,More:1978} to solve the noisy equation \eqref{numerical_equation}, in which $\vec E_\delta, \vec V_\delta$ are used. The iteration is given as follows,   
\begin{equation}\label{iteration}
\begin{aligned}
 \vec g_{1,k+1}&=\vec g_{1,k}+(A_1^T A_1+\gamma_1 I)^{-1} A_1^T(\vec E_\delta-
 A_1 \vec g_{1,k})\\
 &=B_{1,\gamma_1}\vec g_{1,k}+(A_1^T A_1+\gamma_1 I)^{-1} 
 A_1^T\vec E_\delta,\\
 \vec g_{2,k+1}&=\vec g_{2,k}+(A_2^T A_2+\gamma_2 I)^{-1} 
 A_2^T(\vec V_\delta- A_2 \vec g_{2,k})\\
 &=B_{2,\gamma_2}\vec g_{2,k}+(A_2^T A_2+\gamma_2 I)^{-1} 
 A_2^T\vec V_\delta,
 \end{aligned}
\end{equation}
where  
$$
B_{l,\gamma_l}=I-(A_l^T A_l+\gamma_l I)^{-1} A_l^T A_l,\ \ l=1,2,
$$
and the regularization parameters $\gamma_1, \gamma_2$ are chosen as 
small positive constants.

\subsection{Convergence of iteration \eqref{iteration}}
The spectral radius of a square matrix, which is denoted by $\rho(\cdot)$,  
is defined as the largest absolute value of its eigenvalues. 
The next lemma concerns the spectral radius of the matrices 
$B_{l,\gamma_l}, \ l=1,2$, and after that the convergence of \eqref{iteration} is proved. 
\begin{lemma}\label{spectral radius}
 $\rho(B_{l,\gamma_l})<1,\ l=1,2.$ 
\end{lemma}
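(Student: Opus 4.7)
The plan is to exploit the spectral mapping between $A_l^T A_l$ and $B_{l,\gamma_l}$. First I would observe that $B_{l,\gamma_l} = I - (A_l^T A_l + \gamma_l I)^{-1} A_l^T A_l$ is a polynomial (in fact a rational function) of the symmetric positive semidefinite matrix $A_l^T A_l$, so the two matrices commute and are simultaneously diagonalizable by an orthonormal basis. If $\lambda \ge 0$ is any eigenvalue of $A_l^T A_l$, then the corresponding eigenvalue of $B_{l,\gamma_l}$ is
\begin{equation*}
\mu(\lambda) = 1 - \frac{\lambda}{\lambda+\gamma_l} = \frac{\gamma_l}{\lambda+\gamma_l} \in (0,1],
\end{equation*}
with $\mu(\lambda) = 1$ only if $\lambda = 0$. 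Hence $\rho(B_{l,\gamma_l})<1$ reduces to showing that $A_l^T A_l$ has no zero eigenvalue, i.e.\ that $A_l$ has full column rank.

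Next I would use the structure of $A_l$: both $A_1$ and $A_2$ are lower triangular matrices whose diagonal entries are $\tfrac{\Delta_t}{2} v(x_0,t_1)$ and $\tfrac{\Delta_t}{2} v^2(x_0,t_1)$ respectively. Condition \eqref{positive_v}, which was established via the strong maximum principle (Lemma \ref{strong_max}) after choosing the mesh size appropriately, gives $v(x_0,t_1)>0$, so every diagonal entry of $A_l$ is strictly positive. A triangular matrix with nonzero diagonal is invertible, so $A_l$ is nonsingular and $A_l^T A_l$ is symmetric positive definite.

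Combining these two observations, every eigenvalue $\lambda$ of $A_l^T A_l$ satisfies $\lambda > 0$, and hence every eigenvalue $\mu(\lambda) = \gamma_l/(\lambda+\gamma_l)$ of $B_{l,\gamma_l}$ lies strictly in $(0,1)$. Taking the maximum modulus yields $\rho(B_{l,\gamma_l})<1$ for $l=1,2$, which is precisely the claim.

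I do not anticipate a genuine obstacle here: the only non-trivial ingredient is the positivity $v(x_0,t_1)>0$, but this has already been secured by \eqref{positive_v} together with Lemmas \ref{max} and \ref{strong_max}. The rest is a standard eigenvalue computation for the Tikhonov filter $\lambda \mapsto \gamma_l/(\lambda+\gamma_l)$, and the triangular form of $A_l$ bypasses any delicate conditioning analysis that would otherwise be required for a general forward matrix.
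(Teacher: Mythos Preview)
Your proof is correct and follows essentially the same route as the paper: both establish the invertibility of $A_l$ from its lower-triangular form together with condition \eqref{positive_v}, and then use the eigenvalue relation $\mu=\gamma_l/(\lambda+\gamma_l)\in(0,1)$ for $\lambda>0$. The paper reaches this relation by taking an inner product with an eigenvector of $B_{l,\gamma_l}$ rather than invoking simultaneous diagonalization, but the substance is identical (minor slip: only the \emph{first} diagonal entry of $A_l$ carries the factor $1/2$; the remaining ones are $\Delta_t\,v(x_0,t_1)$ and $\Delta_t\,v^2(x_0,t_1)$, which does not affect your argument).
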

\begin{proof}
 Let $(\lambda,\vec y)$ be one eigenpair of $B_{1,\gamma_1}$ with 
 $\vec y\ne \vec 0$. We need to show that $|\lambda|<1$. 
 
 From $B_{1,\gamma_1}\vec y=\lambda \vec y$, we can deduce that 
 \begin{equation*}
  \lambda(A_1^TA_1+\gamma_1 I)\vec y=\gamma_1 \vec y.
 \end{equation*}
Taking inner product with $\vec y$ yields that 
$$\gamma_1 \l\vec y,\vec y\rangle=\lambda\l(A_1^TA_1+\gamma_1 I)\vec y,\vec y\rangle
=\lambda\big(\l A_1\vec y,A_1\vec y\rangle+\gamma_1 \l\vec y,\vec y\rangle\big),$$
which gives 
$$|\lambda|=\Big|\frac{\gamma_1 \| \vec y\|_2^2}{\| A_1\vec y\|_2^2
+\gamma_1 \|\vec y\|_2^2}\Big|.$$ 
Condition \eqref{positive_v} ensures 
the invertibility of $A_1$, which together with $\vec y\ne \vec 0$ gives $\| A_1\vec y\|_2^2>0$. Hence, considering that $\gamma_1$ is chosen as a small positive constant, we have 
$$0<\gamma_1 \| \vec y\|_2^2<\| A_1\vec y\|_2^2
+\gamma_1 \|\vec y\|_2^2,$$ 
which yields $|\lambda|<1$. 

The case for $B_{2,\gamma_2}$ can be proved analogously. The proof is complete. 
\end{proof}

\begin{proposition}\label{convergence}
 The sequences $\{ \vec g_{1,k}\}_{k=0}^\infty, \{\vec g_{2,k}\}_{k=0}^\infty$ generated from 
 iteration \eqref{iteration} are both convergent. Also, if we denote the 
 limits by $\vec g_{1,\delta}$ and $ \vec g_{2,\delta}$, respectively, 
 then 
 $$
 \lim_{\delta\to0+}\|\vec g_{1,\delta}- \vec g_1\|_{\infty}
 =\lim_{\delta\to0+}\|\vec g_{2,\delta}- \vec g_2\|_{\infty}=0, 
 $$
 where $ \vec g_1,\vec g_2$ solve equation \eqref{numerical_equation}.
\end{proposition}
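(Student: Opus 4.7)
The plan is to read iteration \eqref{iteration} as an affine fixed-point recursion whose contraction factor is governed by the matrices $B_{l,\gamma_l}$, use Lemma \ref{spectral radius} to pass $k\to\infty$, and then use invertibility of $A_1$ and $A_2$ together with the relative-noise hypothesis to pass $\delta\to 0+$. Both claims, convergence of the iterates and stability of their limits, then reduce to a short linear-algebra argument.

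First, I would rewrite the first line of \eqref{iteration} as $\vec g_{1,k+1}=B_{1,\gamma_1}\vec g_{1,k}+c_{1,\delta}$ with $c_{1,\delta}:=(A_1^TA_1+\gamma_1 I)^{-1}A_1^T\vec E_\delta$. Any fixed point $\vec g_{1,\delta}$ satisfies $(I-B_{1,\gamma_1})\vec g_{1,\delta}=c_{1,\delta}$; since $I-B_{1,\gamma_1}=(A_1^TA_1+\gamma_1 I)^{-1}A_1^TA_1$, multiplying through by $A_1^TA_1+\gamma_1 I$ cancels the regularized factor and reduces the fixed-point equation to the normal equations $A_1^TA_1\vec g_{1,\delta}=A_1^T\vec E_\delta$. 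Subtracting the fixed-point identity from the recursion produces the error relation
\begin{equation*}
\vec g_{1,k}-\vec g_{1,\delta}=B_{1,\gamma_1}^{k}(\vec g_{1,0}-\vec g_{1,\delta}).
\end{equation*}
Lemma \ref{spectral radius} gives $\rho(B_{1,\gamma_1})<1$, which is equivalent to $B_{1,\gamma_1}^{k}\to 0$ as $k\to\infty$, so $\vec g_{1,k}\to\vec g_{1,\delta}$ irrespective of the initial guess. The identical argument, applied to $A_2$ and $\vec V_\delta$, yields $\vec g_{2,k}\to \vec g_{2,\delta}$.

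Second, I would identify the limit explicitly and pass $\delta\to 0+$. Condition \eqref{positive_v} forces every diagonal entry of the lower-triangular matrices $A_1$ and $A_2$ to be strictly positive, so both matrices are invertible. The normal equations then collapse to $A_1\vec g_{1,\delta}=\vec E_\delta$ and $A_2\vec g_{2,\delta}=\vec V_\delta$, while by definition $A_1\vec g_1=\vec E$ and $A_2\vec g_2=\vec V$. Subtracting and using the componentwise relative-error bound $\|(\vec E_\delta-\vec E)/\vec E\|_{\infty}\le \delta$, which implies $\|\vec E_\delta-\vec E\|_{\infty}\le \delta\|\vec E\|_{\infty}$, gives
\begin{equation*}
\|\vec g_{1,\delta}-\vec g_1\|_{\infty}\le \|A_1^{-1}\|_{\infty}\,\|\vec E_\delta-\vec E\|_{\infty}\le \delta\,\|A_1^{-1}\|_{\infty}\,\|\vec E\|_{\infty}\longrightarrow 0,
\end{equation*}
and the same estimate with $A_2$ and $\vec V$ handles $\vec g_{2,\delta}$.

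The step I expect to require the most care is identifying the limit of the \emph{regularized} iteration with the \emph{unregularized} discrete solution of \eqref{numerical_equation}. At first sight the parameters $\gamma_l$ look as if they should bias the fixed point, but in fact the factor $(A_l^{T}A_l+\gamma_l I)^{-1}$ appears on both sides of the fixed-point equation and cancels cleanly; $\gamma_l$ therefore controls only the contraction rate of the iteration, not its destination. Once this cancellation is made explicit, the two remaining ingredients, namely $\rho(B_{l,\gamma_l})<1$ from Lemma \ref{spectral radius} and continuous dependence of $A_l^{-1}$ on the right-hand side, are both routine.
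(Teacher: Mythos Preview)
Your proposal is correct and follows essentially the same approach as the paper: invoke Lemma \ref{spectral radius} to obtain convergence of the affine iteration, identify the limit as the solution of the normal equations (equivalently $A_l^{-1}$ applied to the noisy data, by invertibility of $A_l$), and then let $\delta\to 0+$. Your write-up is in fact more explicit than the paper's, which simply asserts that convergence ``follows from Lemma \ref{spectral radius} straightforwardly'' and records the limit as $(A_l^TA_l)^{-1}A_l^T$ applied to the data without spelling out the cancellation of $\gamma_l$ that you highlight.
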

\begin{proof}
 The convergence follows from Lemma \ref{spectral radius} straightforwardly. 
 
 From \eqref{iteration}, we have 
 \begin{equation*}
  \vec g_{1,\delta}=(A_1^TA_1)^{-1}A_1^T \vec E_{\delta},
  \ \vec g_{2,\delta}=(A_2^TA_2)^{-1}A_2^T \vec V_{\delta}.
 \end{equation*}
Considering the results $ \vec g_1= (A_1^TA_1)^{-1}A_1^T \vec E,
\ \vec g_2=(A_2^TA_2)^{-1}A_2^T\vec V$ and 
$$\lim_{\delta\to0+}\| \vec E-\vec E_{\delta}\|_{\infty}
=\lim_{\delta\to0+} \| \vec V-\vec V_{\delta}\|_{\infty}=0,$$ 
it follows that 
$\vec g_{l,\delta}\to \vec g_l,\ l=1,2$ in the sense of $\|\cdot\|_\infty$ as $\delta\to 0+$.
\end{proof}

\subsection{Forward problem solver}
To obtain the measurements, 
the direct problem of equation \eqref{SDE} should be considered. 
We first introduce the finite element method, and then
the GMsFEM\cite{gmsfem} to handle the case that $\kappa$ is highly heterogeneous.

On space $x$, the piecewise linear basis $\{\phi_j(x) \}_{j=1}^m$ is used with $\phi_j=0$ on $\partial D$ and $ \phi_j(x_k) = \delta_{jk},$ 
where $\{ x_j \}_{j=1}^m$ consist of a Delaunay triangulation $\mathcal{T}^h$ on domain $D$, and $h>0$ is the fine mesh size. Then we define the finite element space as $\Vc_m=\text{span}\{\phi_j(x):j=1,\cdots,m\}$. With this space, the 
projection operator $P_m:L^2(D)\mapsto \Vc_m$ can be given as 
\begin{equation*}
 P_m \psi(x)= \sum_{j=1}^{m} \psi(x_j) \phi_j(x) =:\tilde{\psi}(x). 
\end{equation*}
Here we use the notation $\tilde{\psi}$ for short and the corresponding 
vector form is denoted by $\vec{\psi}=[\psi(x_j)]_{j=1}^m$. 

Writing 
\begin{align*}
   \tilde{u}(x,t,\o) = \sum_{j=1}^m u(x_j,t,\o) \phi_j(x),
   \quad \tilde{f}(x) = \sum_{j=1}^m f(x_j) \phi_j(x),
\end{align*}
then the weak formulation of equation \eqref{SDE} with test function 
$\phi_i$ is 
\begin{equation*}\label{eq:fem_SDE}
\sum_{j=1}^m \partial_t^\alpha u(x_j,t,\o) \l\phi_j, \phi_i\rd
+  \sum_{j=1}^m u(x_j,t,\o) \l\A \phi_j, \phi_i\rd = 
g(t,\o) \sum_{j=1}^m f(x_j) \l\phi_j, \phi_i\rd.
\end{equation*}
From the above formulation, we define the mass matrix $\vec{M}$ 
and stiff matrix $\vec{S}$ w.r.t. basis $\{\phi_j\}_{j=1}^m $ as
\begin{align*}
    \vec{M} = \Big[\l\phi_j, \phi_i\rd \Big]_{i,j = 1}^m , 
    \quad   \vec{S} = \Big[ \l\A \phi_j, \phi_i\rd\Big]_{i,j = 1}^m ,
\end{align*}
which will be used to construct the discretized scheme for equation 
\eqref{SDE}.

For the discretization on time, the $L_1$-stepping scheme \cite{JinLazarovZhou:2016,RundellZhang:2018} is used. Again we use the uniform time mesh $0 = t_0 < t_1 <\cdots < t_N = T$ and 
denote $\Delta_t = T/N$. Then the fractional derivative $\partial_t^\alpha$ is approximated as
\begin{align*}
    & \partial_t^\alpha \psi(t_1) \approx b_{1,0} (\psi(t_1) - \psi(t_0)),\\
    & \partial_t^\alpha \psi(t_n) \approx \sum_{k=1}^{n-1} (b_{n,k-1} - b_{n,k}) \psi(t_k) 
    + b_{n,n-1}\psi(t_n) - b_{n,0} \psi(t_0) , 
    \quad n=2,\cdots,N,
\end{align*}
with parameters
$$ b_{n,k} = \Gamma(2-\alpha)^{-1} \Delta_t^{-\alpha} [(n-k)^{1-\alpha} - (n-k-1)^{1-\alpha}] ,\quad k = 0,\cdots,n-1.$$

For the random term $g(t_n,\o) = g_1(t_n) + g_2(t_n) \dot \W(t_n)$, due to $\W(t)-\W(s)\sim \mathcal{N}(0,t-s),$ we have 
$$ \dot \W(t_n) \approx [\W(t_n)-\W(t_{n-1})]/\Delta_t \sim 
\Delta_t^{-1/2}\mathcal{N}(0,1).$$

Hence, considering the vanishing initial condition, the finite element scheme for solving equation \eqref{SDE} is given as: for $n=1,\cdots,N$, find the vector form $\vec{u}_n$ of $\tilde u(x,t_n,\o)\in \Vc_m$ such that  
\begin{equation}\label{eq:SDE_discrete}
\begin{aligned}
\left( b_{1,0}\vec{M} + \vec{S} \right) \vec{u}_1
=& \vec{M}  \Big( g_1(t_1)\vec f+ g_2(t_1) 
\Delta_t^{-1/2}\mathcal{N}(0,1)\vec f\Big),\\
\left( b_{n,n-1}\vec{M} + \vec{S} \right) \vec{u}_n
=& \vec{M}  \Big( g_1(t_n)\vec f+ g_2(t_n)\Delta_t^{-1/2} \mathcal{N}(0,1)\vec f
+\sum_{k=1}^{n-1}(b_{n,k}-b_{n,k-1})\vec{u}_k \Big).
\end{aligned}
\end{equation}
Following this scheme, the solution $v(x,t)$ of equation \eqref{v} 
can be simulated similarly.

\subsubsection{GMsFEM}\label{gmsfem}
In many practical applications, the coefficient $\kappa(x)$ 
can be highly heterogeneous, in which very fine mesh is required in the finite element method, accompanied with huge computational cost. So we choose the Generalized Multiscale Finite Element Method (GMsFEM \cite{gmsfem}) as the model reduction technique here. 
The GMsFEM provides a systematic way of reducing the 
computational cost in solving various types of highly heterogeneous partial differential equations \cite{fu2017fast,galvis2015generalized,vasilyeva2020multiscale,chung2015mixed}. 
This method reduces the degrees of freedom of large systems by constructing
appropriate multiscale basis functions, which are only needed to calculate one time. Therefore GMsFEM is particular suitable for the computing that requires solving a fixed equation repetitively but with different source or boundary conditions. Besides, by choosing different number of basis, we can easily tune the accuracy of the solution, which may be useful in inverse problems based on the recent
research \cite{lessismore}.  

In GMsFEM, there are two stages to construct the generalized multiscale basis: the snapshot stage and offline stage.
\begin{figure}[h]
	\centering
		\includegraphics[width=3in,height=2in]{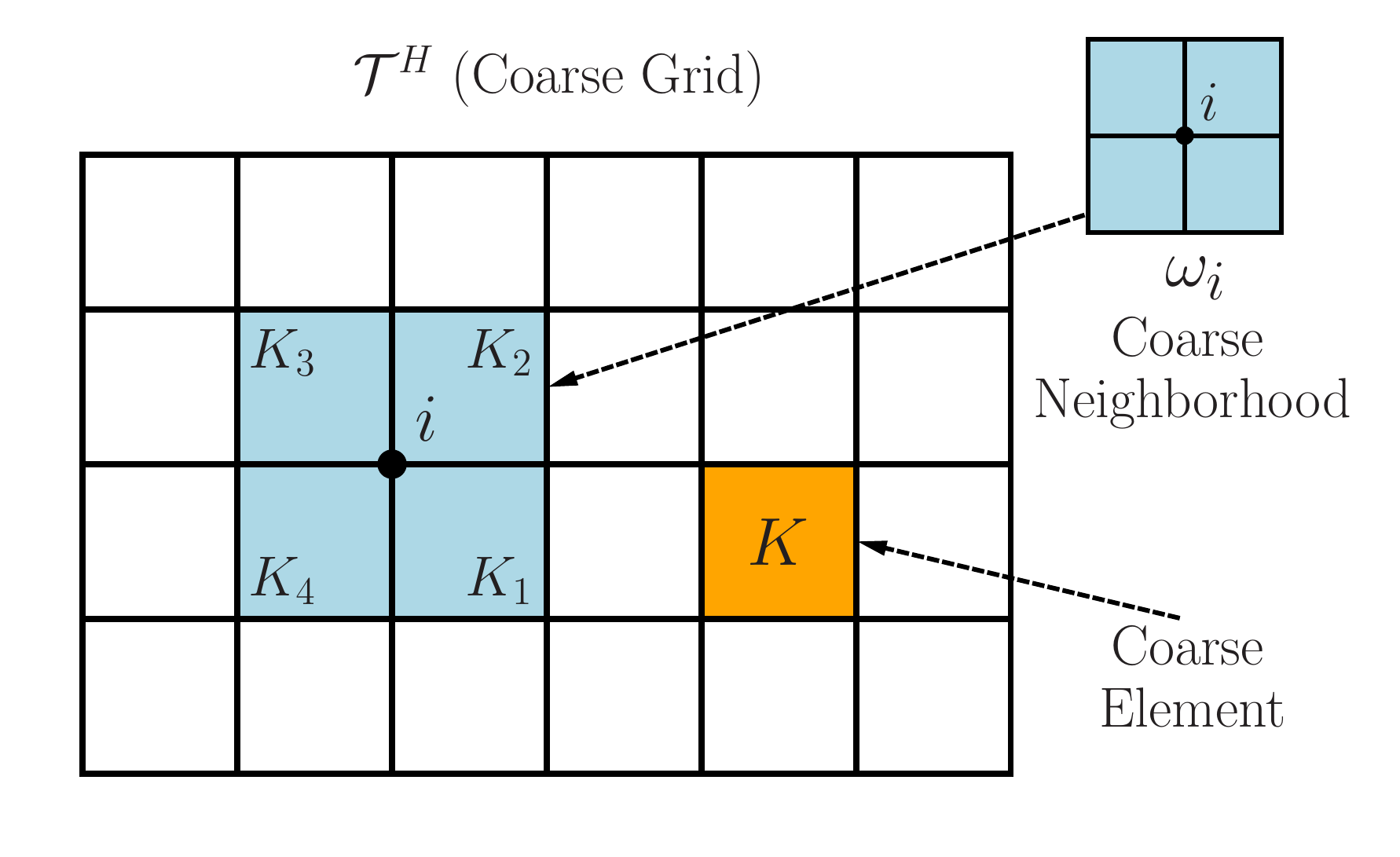}
		\caption{Illustration of coarse neighborhood and coarse element.}
		\label{fig:grid}
\end{figure}
We consider a triangulation of domain $D$ denoted by  $\mathcal{T}^H$
such that  $\mathcal{T}^h$ is its refinement. 
Let $\mathcal{S}^H$ be the set of all coarse grid nodes and $N_S=|\mathcal{S}^H|$. Elements of $\mathcal{T}^H$ are called coarse grid blocks.
For each vertex $ {x}_i \in \mathcal{S}^H$ in the grid  $\mathcal{T}^H$, we define the coarse neighborhood $\omega_i$ by
\begin{equation*}
\omega_i = \bigcup \{ K_j \; : \; K_j \subset \mathcal{T}^H, \;  {x}_i \in K_j \}.
\end{equation*}
That is, $\omega_i$ is the union of all coarse grid blocks $K_j$
containing the vertex $ {x}_i$, see Figure \ref{fig:grid}. We will construct multiscale basis functions
in each coarse neighborhood $\omega_i$. 

We begin by the construction of local snapshot spaces in $\omega_i$.
There are two types of local snapshot spaces.
The first type is
\begin{equation*}
\Vc_1^{i,\text{snap}} = \Vc_m(\omega_i),
\end{equation*}
where $\Vc_m(\omega_i)$ is the restriction of the $\Vc_m$ to $\omega_i$. 
Therefore, $\Vc_1^{i,\text{snap}}$ contains all possible fine scale functions defined on $\omega_i$. The second type is the harmonic extension space. More specifically, let $\Vc_m(\partial\omega_i)$ be the restriction of the conforming space to $\partial\omega_i$.
Then we define the fine-grid delta function $\delta_k \in \Vc_m(\partial\omega_i)$ on $\partial\omega_i$ by
\begin{equation*}
\delta_k( {x}_l) = 
\begin{cases}
1, \quad & l = k, \\
0, \quad & l \ne k,
\end{cases}
\end{equation*}
where $ \{{x}_l\}$ are all fine grid nodes on $\partial\omega_i$. 
Given $\delta_k$, we seek $ {u}_{k}$ by
\begin{equation}
\begin{aligned}
- \A u_{k} &=  {0}, &&\text{in} \ \omega_i, \\
{u}_{k} &= \delta_k, &&\text{on} \ \partial\omega_i.
\end{aligned}
\label{eq:cg_snap_har}
\end{equation}
The linear span of the above harmonic extensions is our second type local snapshot space $\Vc^{i,\text{snap}}_2$. 
To simplify the presentations, we will use $\Vc^{i,\text{snap}}$ to denote $\Vc^{i,\text{snap}}_1$ or $\Vc^{i,\text{snap}}_2$
when there is no need to distinguish them. Moreover, we write
\begin{equation*}
\Vc^{i,\text{snap}} = \text{span} \{  {\psi}^{i,\text{snap}}_k: k=1,2,\cdots, M^{i,\text{snap}} \},
\end{equation*}
where ${\psi}^{i,\text{snap}}_k$ is the snapshot functions, and 
$M^{i,\text{snap}}$ is the number of basis functions in $\Vc^{i,\text{snap}}$. 

The dimension of the snapshot space is still too large for computation.
We can use a spectral problem to select the dominant modes from
the snapshot space. 
Specifically, in each neighborhood, we consider
\begin{equation}\label{eq:spec-cg}
\A \phi=\lambda \tilde{\kappa} \phi, 
\end{equation}
where $\tilde{\kappa}={\kappa}\sum_{i=1}^{N_S} | \nabla \chi_i |^2,$  
$N_S$ is the total number of neighborhoods, and $\chi_i$ is
the partition of unity function \cite{pu} for $\omega_i$.
One choice of a partition of unity function is the coarse grid hat function whose value at the coarse vertex $x_i$ is 1 and 0 at all other coarse vertices. Another choice of the basis function is introduced in \cite{hou1997}.
We solve the above spectral problem (\ref{eq:spec-cg}) in the local snapshot space 
$\Vc^{i,\text{snap}}$. 
Then we use the first $L_i$ eigenfunctions $\phi_i$ corresponding to the first $L_i$ eigenvalues  to construct the local offline space. 
We define
\begin{equation*}
{\psi}^{i,\text{off}}_l = \sum_{k=1}^{M^{i,\text{snap}}} \phi_{l,k}  {\psi}^{i,\text{snap}}_k, \quad\quad l=1,2,\cdots, L_i,
\end{equation*}
where $\phi_{l,k}$ is the $k$-th component of $\phi_l$. 
Note that the function ${\psi}^{i,\text{off}}_l$ is not globally
continuous, therefore we need to multiply it with the partition of unity function. We define the local offline space as
\begin{equation*}
\Vc^{i,\text{off}}_H = \text{span} \{\chi_i{\psi}^{i,\text{off}}_l: l=1,2,\cdots, L_i \}.
\end{equation*}
Then, the offline space can be defined as
\begin{equation*}
\Vc^{\text{off}}_H = \text{span} \{\Vc^{i,\text{off}}_H: i=1,2,\cdots, N_S \}.
\end{equation*}
We note that equations \eqref{eq:cg_snap_har}, \eqref{eq:spec-cg} are solved on the fine grid $\mathcal{T}^h$ numerically.
Then we can treat each discrete offline basis in $\Vc^{\text{off}}$
as a column vector $\vec\Phi_i$, and denote $\vec R = [\vec \Phi_1,\cdots, \vec \Phi_L ]$ be the matrix that represents all the multiscale
basis functions (total number $L=\sum_{i}^{N_S}L_i$).
Thus, the discretized scheme for solving equation \eqref{SDE} with GMsFEM is given as: for $\tilde{u}_{H,n}\in \Vc^{\text{off}}_H,\ n=1,\cdots,N,$ its 
vector form $\vec{u}_{H,n}$ satisfies 
\begin{equation}\label{eq:SDE_discrete_ms}
\begin{aligned}
\left( b_{1,0}\vec{M}_H + \vec{S}_H \right) \vec{u}_{H,1}
=& \vec{M}_H  \Big( g_1(t_1)\vec f_H+ g_2(t_1) 
\Delta_t^{-1/2}\mathcal{N}(0,1)\vec f_H\Big),\\
\left( b_{n,n-1}\vec{M}_H + \vec{S}_H \right) \vec{u}_{H,n}
=& \vec{M}_H  \Big( g_1(t_n)\vec f_H+ g_2(t_n)\Delta_t^{-1/2} \mathcal{N}(0,1)\vec f_H\\
&\quad\quad+\sum_{k=1}^{n-1}(b_{n,k}-b_{n,k-1})\vec{u}_{H,k} \Big),
\end{aligned}
\end{equation}
where $\vec{M}_H={\vec{R}}^T\vec{M}\vec{R},\ \vec{S}_H=\vec{R}^T\vec{S}\vec{R},\ \vec{f}_H=\vec{R}^T\vec f$.
Typically, we only need to select a few numbers of basis in a neighborhood, which ensures that the degrees of freedom of scheme  \eqref{eq:SDE_discrete_ms} is much smaller comparing with scheme \eqref{eq:SDE_discrete}.
After we obtain $\vec{u}_{H,n}$, one projects the solution into the space 
$\Vc_m$ through $\vec{u}_{H,n}^h=\vec R\vec{u}_{H,n}$.

\subsection{Numerical experiments}

Now we present several numerical experiments to show the performance of our algorithm. 
We first consider the smooth case, 
$$g_1(t)=t+\sin(2\pi t)+\sin(3\pi t),\quad g_2(t)=0.5t+\sin(\pi t)-\sin(2\pi t).$$
We set $T=1$, the spatial component $f(x)$ in source term is shown in Figure \ref{fig:source}, and the observation point is chosen as $x_0=(0.4,0.2)$, which is out of $\text{supp}(f)$. In addition, $3\times 10^4$ realizations 
of the single point data $u(x_0,t,\o)$ are recorded, and $1\%$ relative noise is added on the moments, i.e. $\delta=1\%$.  
\begin{figure}[H]
	\centering
	\subfigure[$f(x)$]{
		\includegraphics[width=3in,height=2.1in]{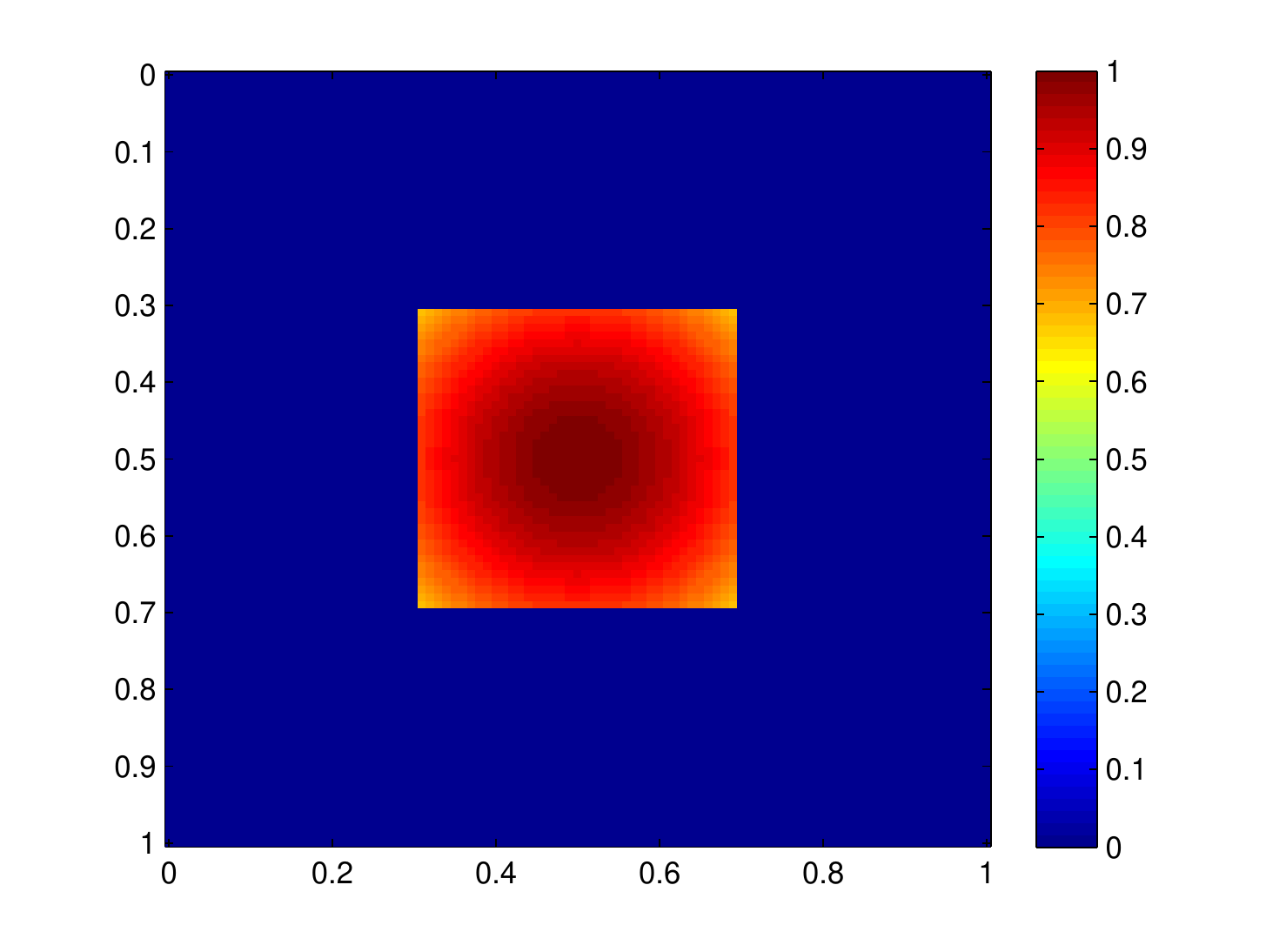}}	
	\caption{Spatial component $f(x).$}
	\label{fig:source}
\end{figure}

We consider both the homogeneous and highly heterogeneous media cases.
For the homogeneous case (test model 1), the model size is $50\times 50$. We apply FEM \eqref{eq:SDE_discrete} for 
the forward modeling. The corresponding inversion results
 are presented in Figure \ref{fig:homo}. We can see our inversion algorithm \eqref{iteration} can generate satisfactory approximations of the targeted unknowns. 

Two heterogeneous experiments are considered and the corresponding $\kappa(x)$ are shown in Figure \ref{fig:model}. The size of these models is $100\times 100$, and in GMsFEM \eqref{eq:SDE_discrete_ms}, we use a $10\times 10$ coarse grid. 
Therefore, the degrees of freedom for the FEM system is 9801 and it is 
242 for the GMsFEM with 2 bases, we can see huge reduction of the unknowns in forward modeling.
The inversion results for the heterogeneous case are displayed in Figures \ref{fig:het} and \ref{fig:cross}.
The comparisons of the approximations for $g_1$ and $|g_2|$ from 
the fine-grid FEM, the GMsFEM with two bases and the 
GMsFEM with one basis are displayed. It can be seen clearly that the GMsFEM with only one basis yields unjustifiable inversion results especially for test model 2. However, the results from GMsFEM with two bases is can be comparable with the FEM results and it is better than FEM for the approximation of $|g_2|$. This is not surprising according to \cite{lessismore}, which tells us that more accurate forward modeling will not always yield better inversion performance. Also we note that the running time of FEM is about 15 times of GMsFEM, and more computational time saving is expected if the size of the model is larger. In addition, GMsFEM with one basis is actually the MsFEM basis \cite{hou1997}, which is more suitable for highly oscillating media, it is an ideal choice to use spectral basis space for high-contrast media inversion.

\begin{figure}[H]
	\centering
	\subfigure[Approximation comparison for $g_1$]{
		\includegraphics[width=2.8in,height=2.1in]{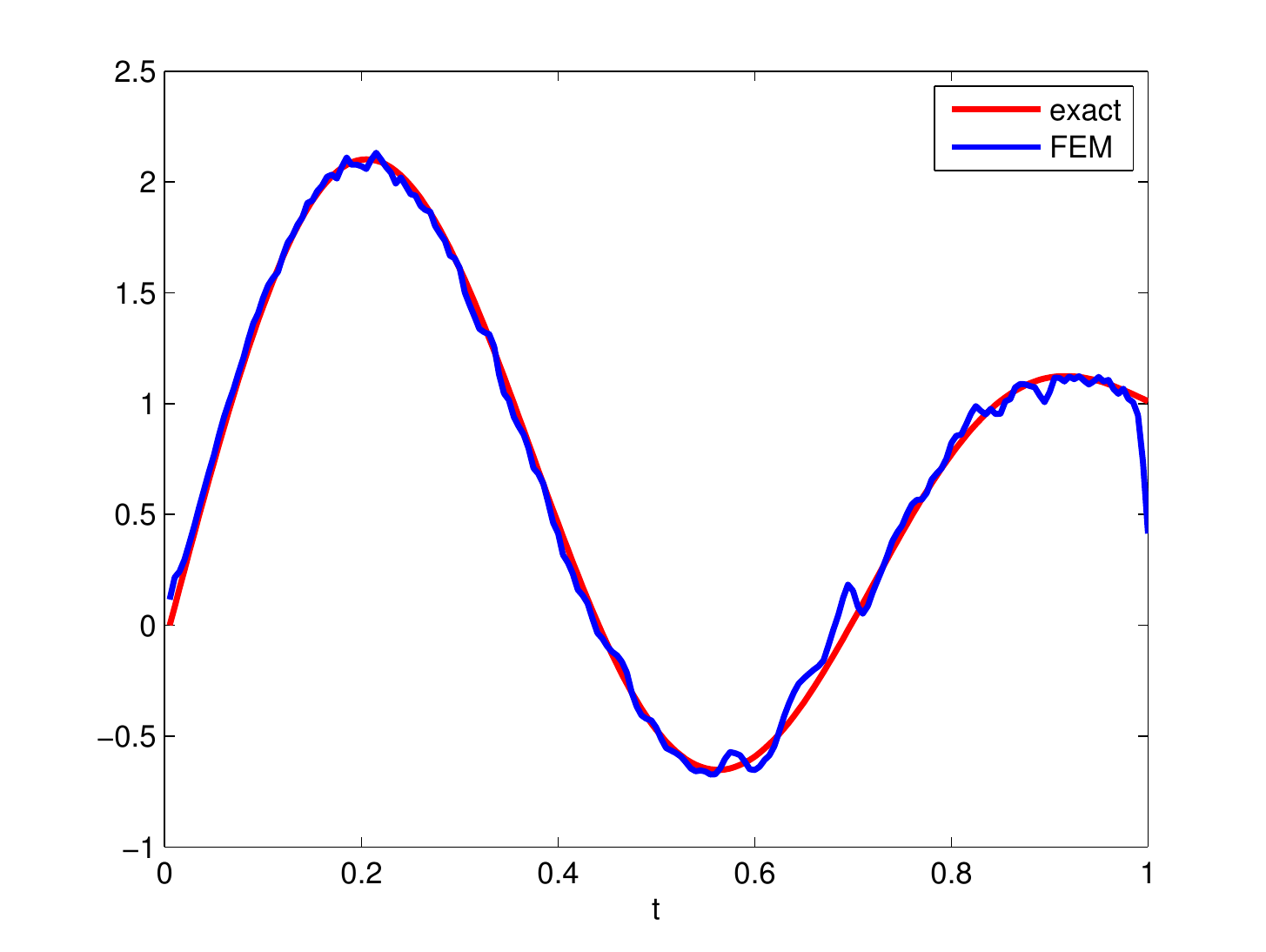}}	
	\subfigure[Approximation comparison for $|g_2|$]{
		\includegraphics[width=2.8in,height=2.1in]{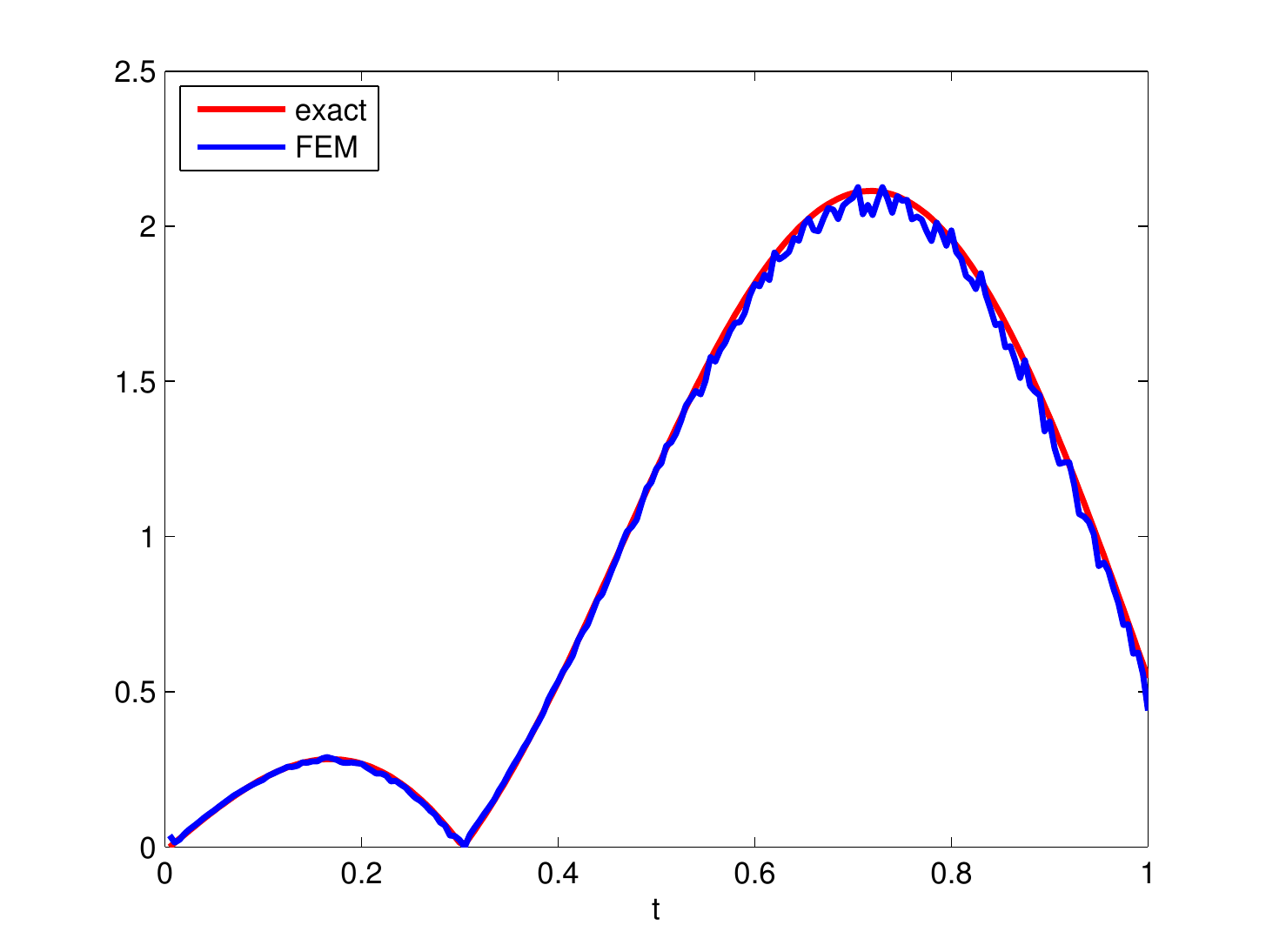}}
	\caption{Results for homogeneous model (test model 1), smooth case.}
	\label{fig:homo}
\end{figure}

\begin{figure}[H]
	\centering
	\subfigure[test model 2]{
		\includegraphics[width=2.9in,height=2.1in]{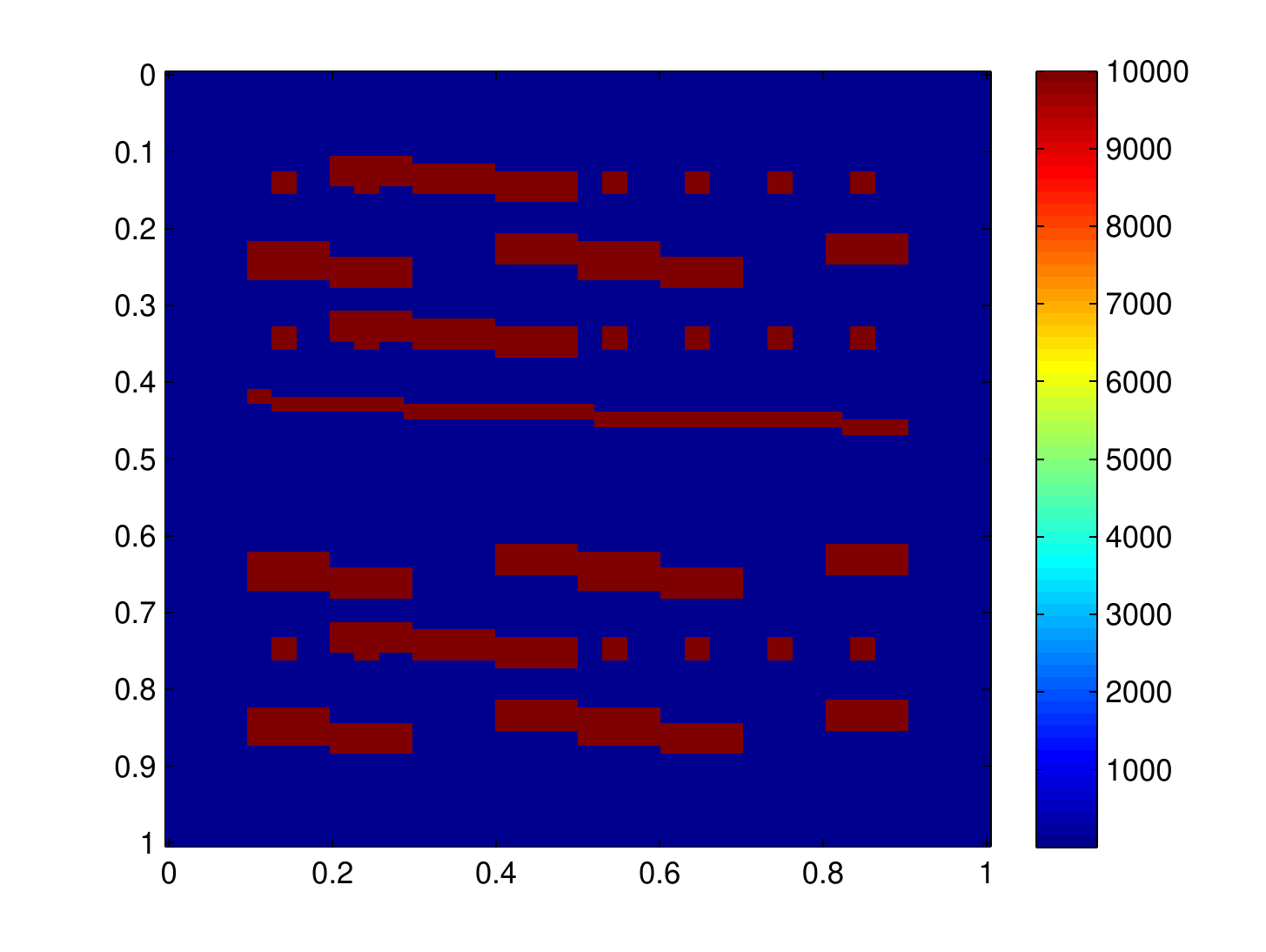}}
	\subfigure[test model 3]{
	\includegraphics[width=2.9in,height=2.1in]{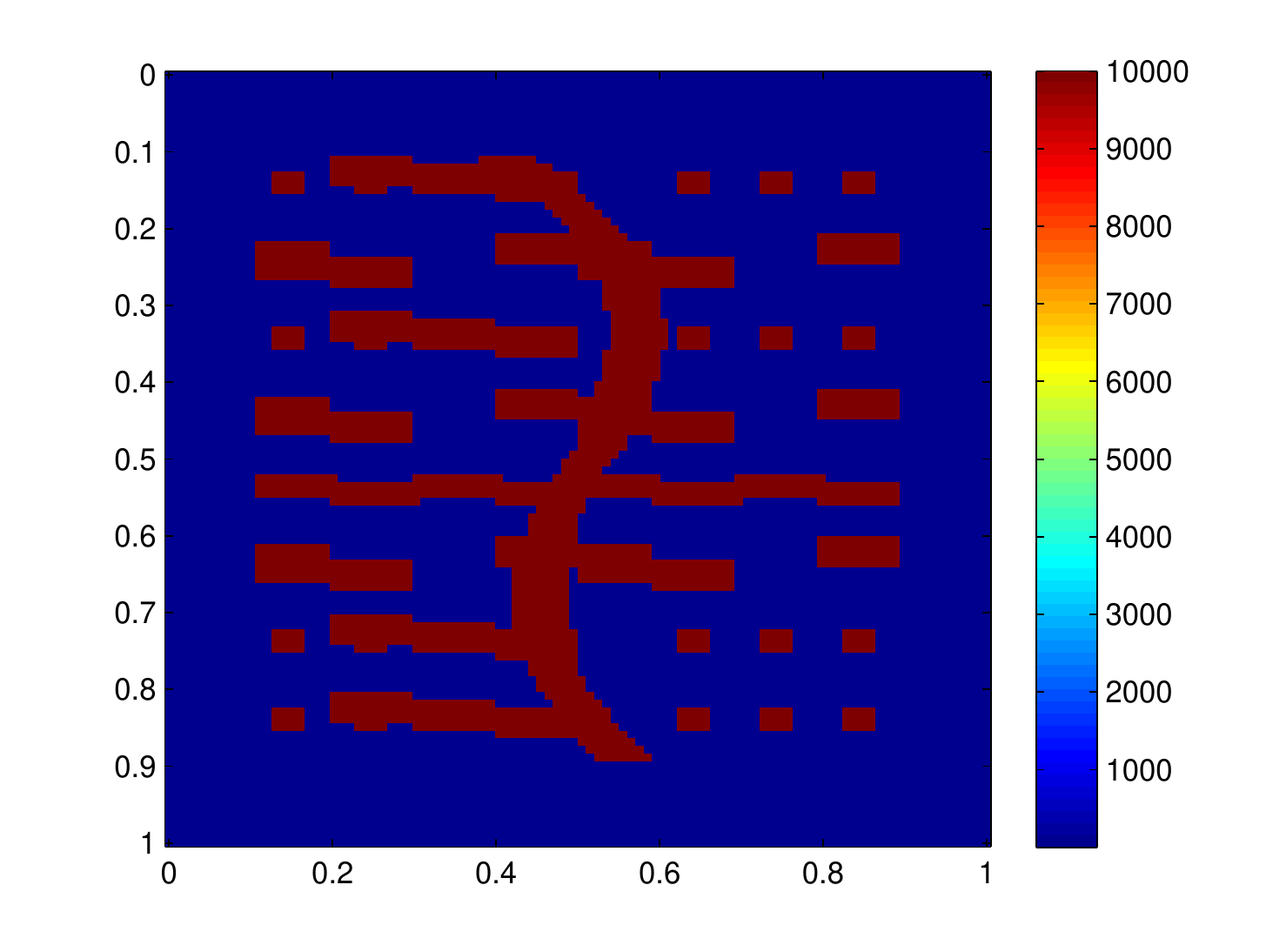}}		
	\caption{Heterogeneous test models.}
	\label{fig:model}
\end{figure}

\begin{figure}[H]
	\centering
	\subfigure[Approximation comparison for $g_1$]{
		\includegraphics[width=2.8in,height=2.1in]{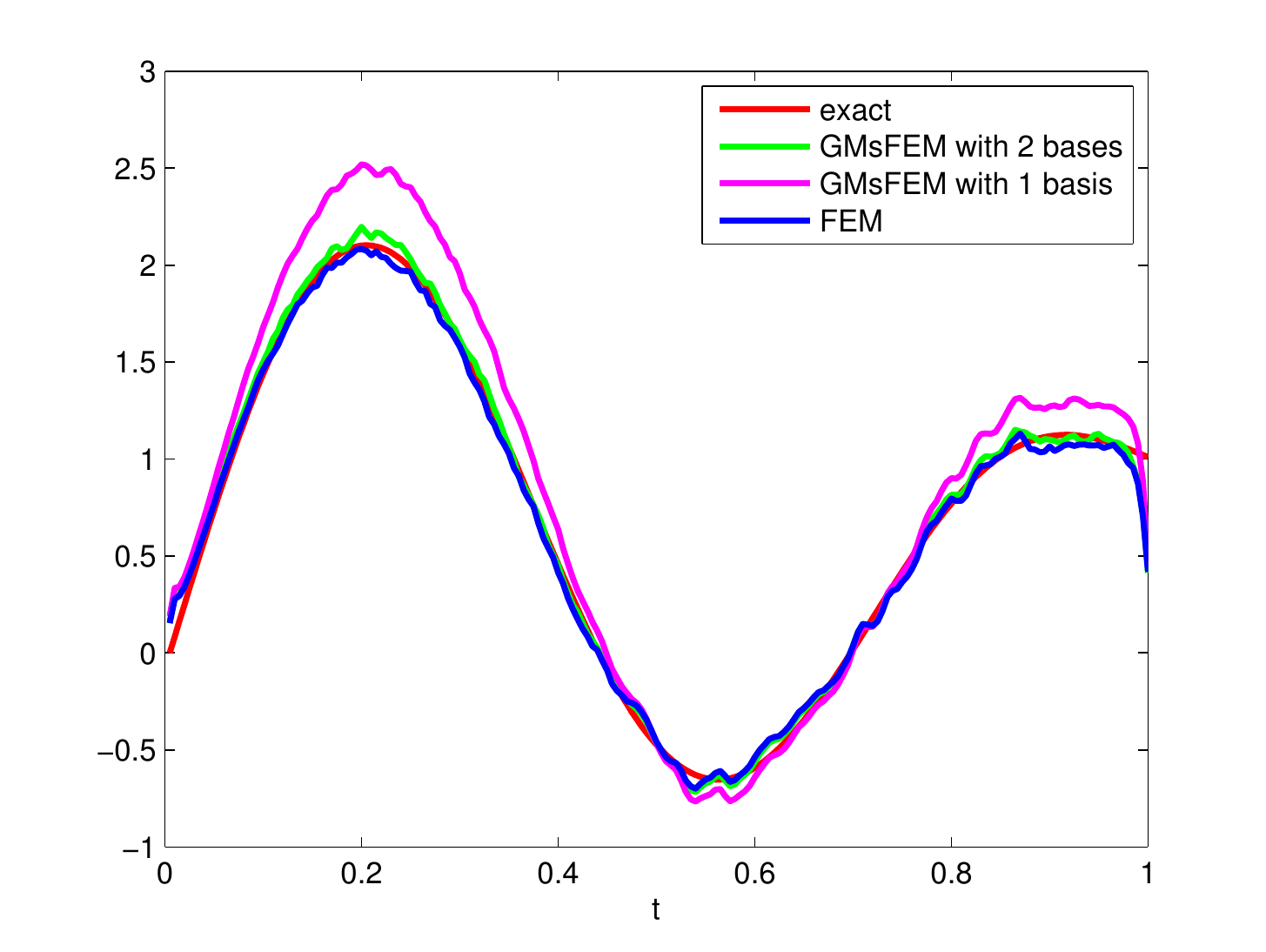}}	
	\subfigure[Approximation comparison for $|g_2|$]{
		\includegraphics[width=2.8in,height=2.1in]{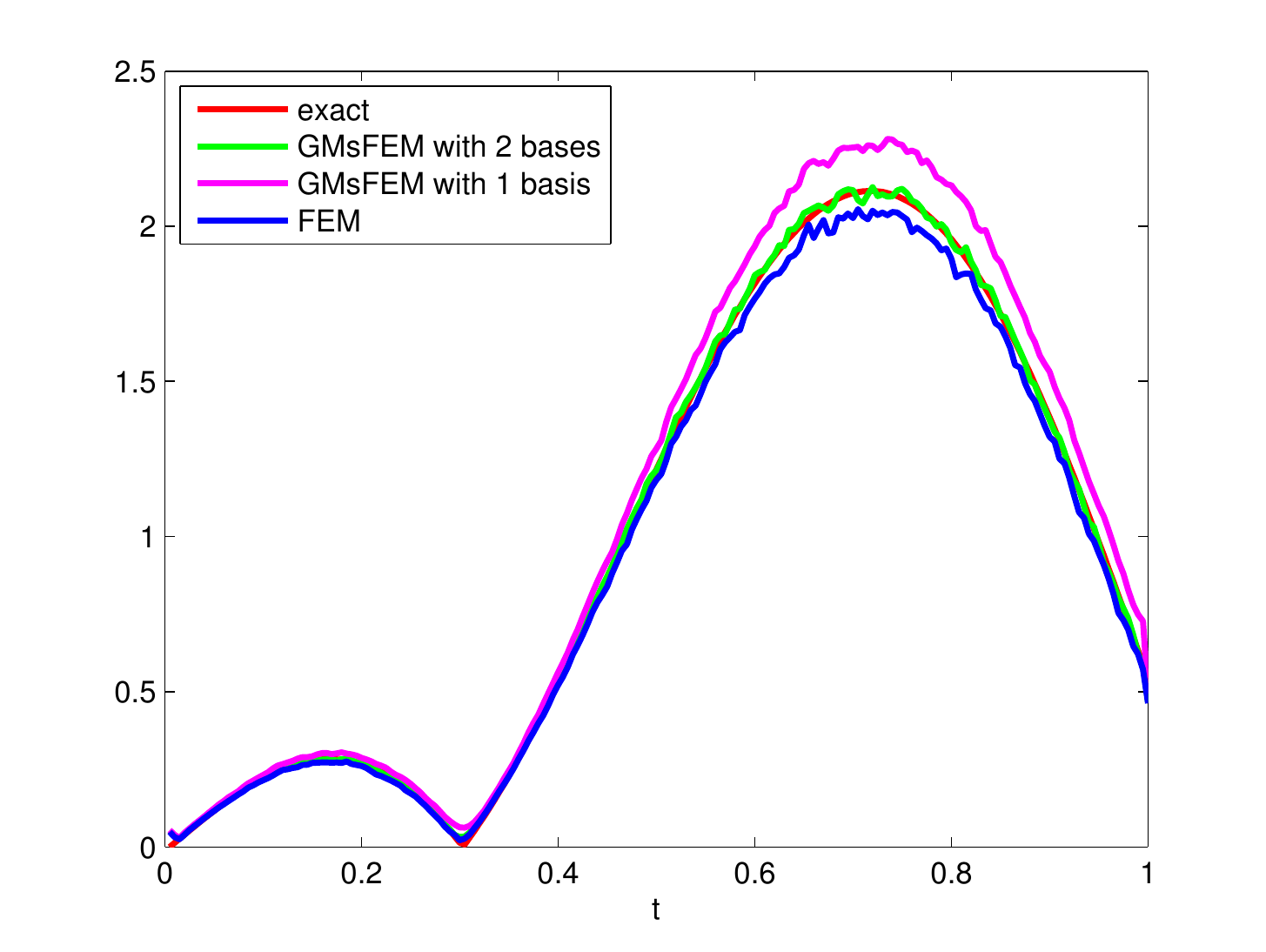}}
	\caption{Results for test model 2, smooth case.}
	\label{fig:het}
\end{figure}

\begin{figure}[H]
	\centering
	\subfigure[Approximation comparison for $g_1$]{
		\includegraphics[width=2.8in,height=2.1in]{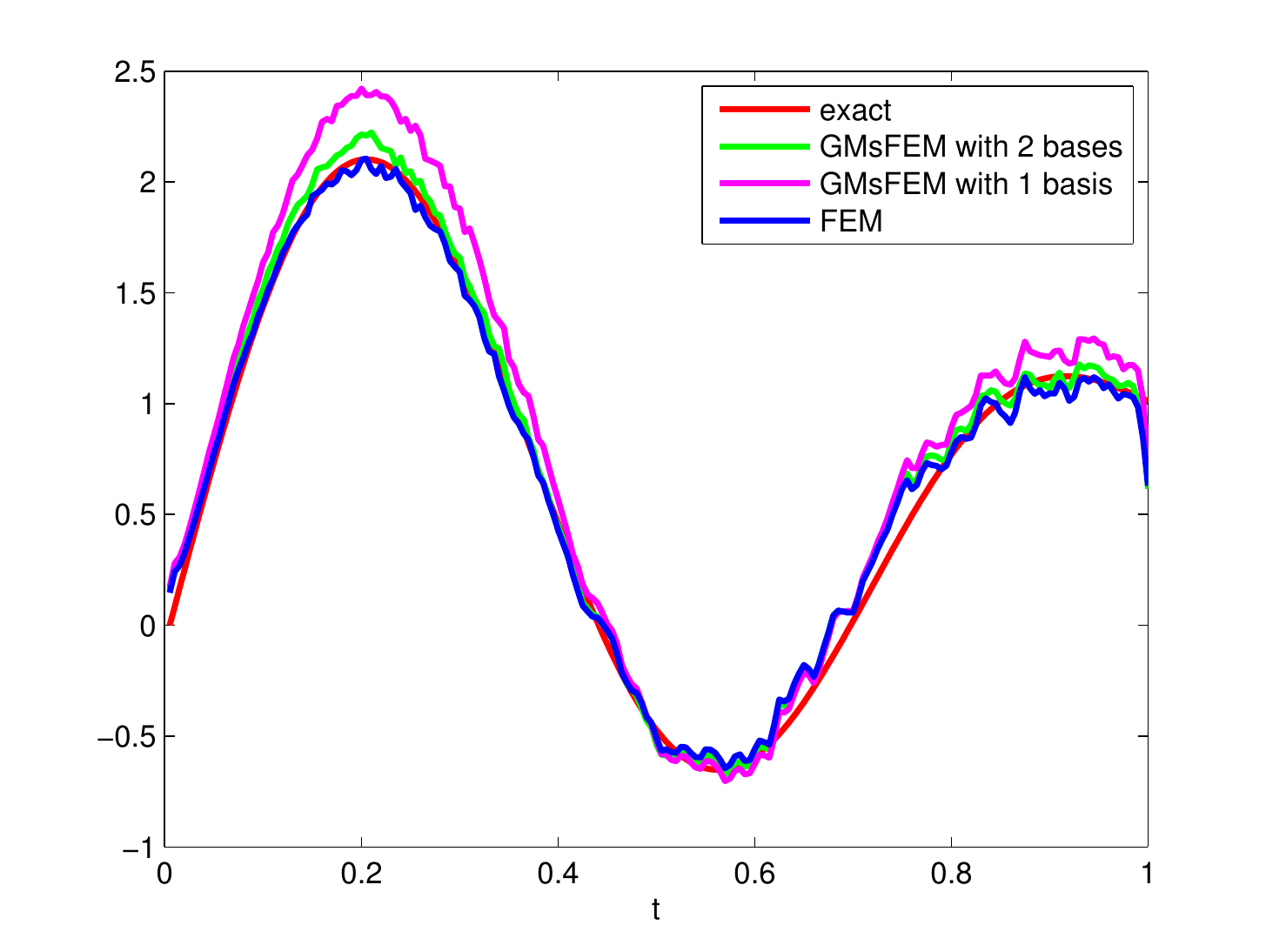}}	
	\subfigure[Approximation comparison for $|g_2|$]{
		\includegraphics[width=2.8in,height=2.1in]{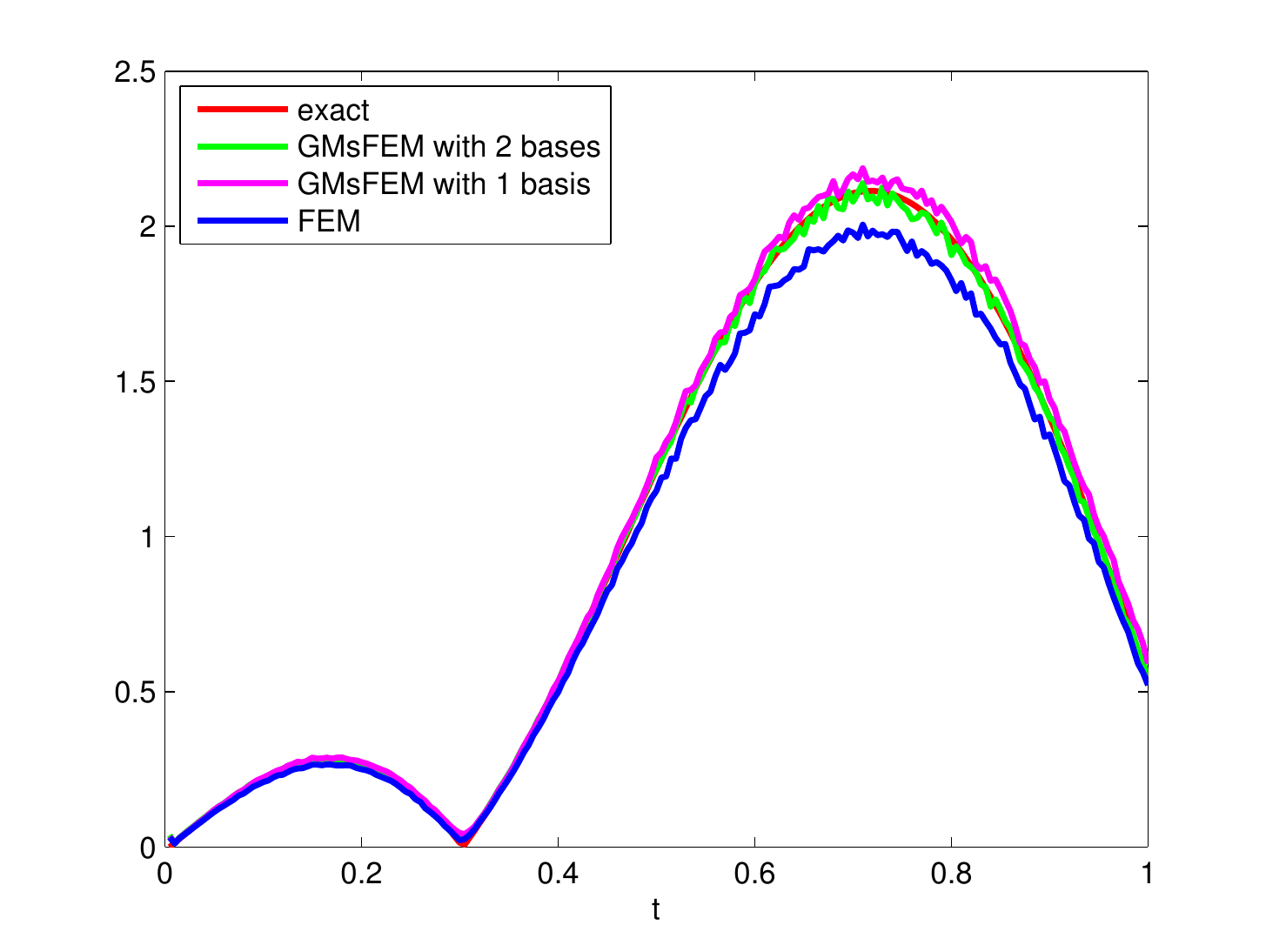}}
	\caption{Results for test model 3, smooth case.}
	\label{fig:cross}
\end{figure}

\begin{figure}[H]
	\centering
	\subfigure[Approximation comparison for $g_1$]{
		\includegraphics[width=2.8in,height=2.1in]{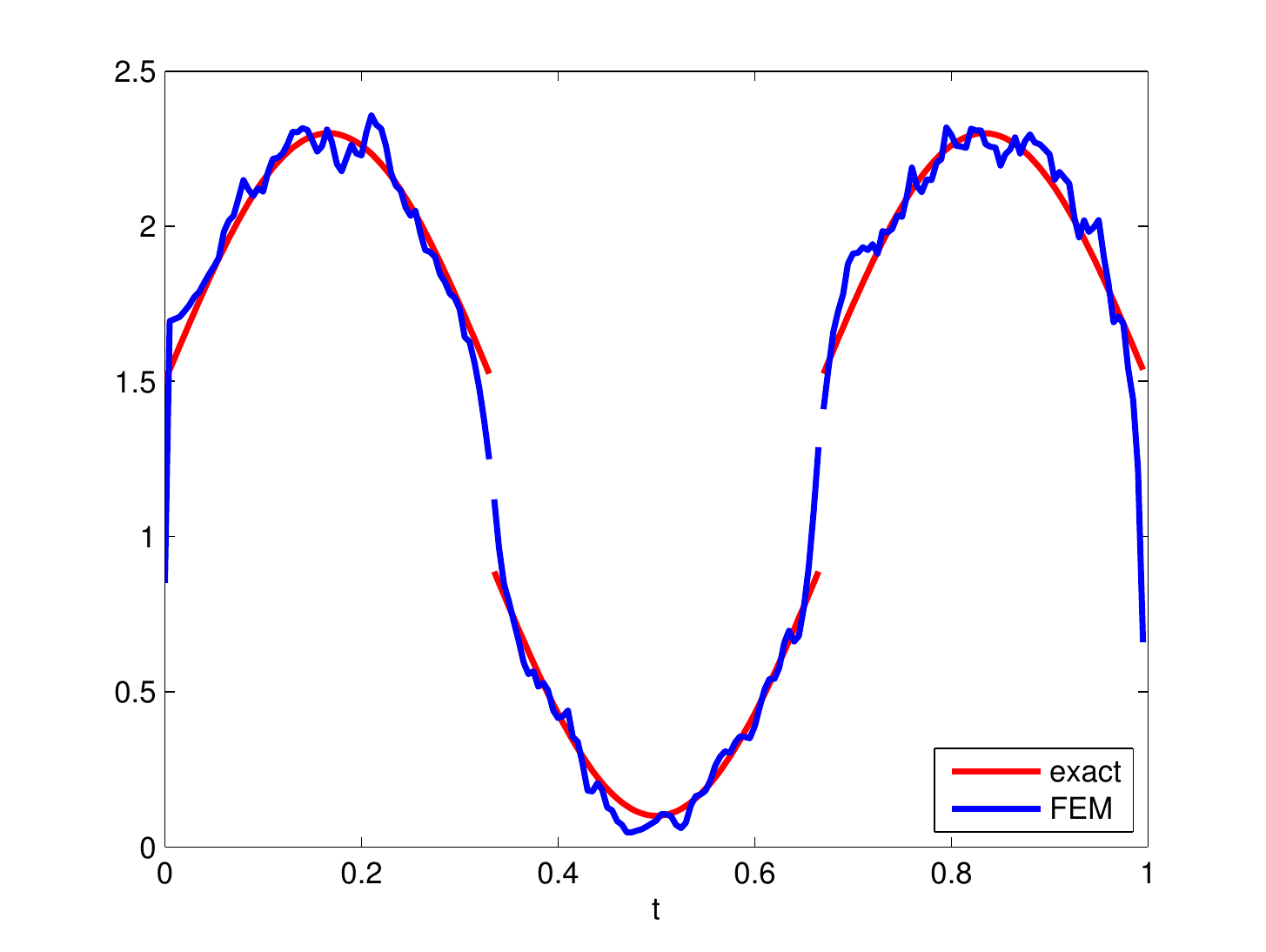}}	
	\subfigure[Approximation comparison for $|g_2|$]{
		\includegraphics[width=2.8in,height=2.1in]{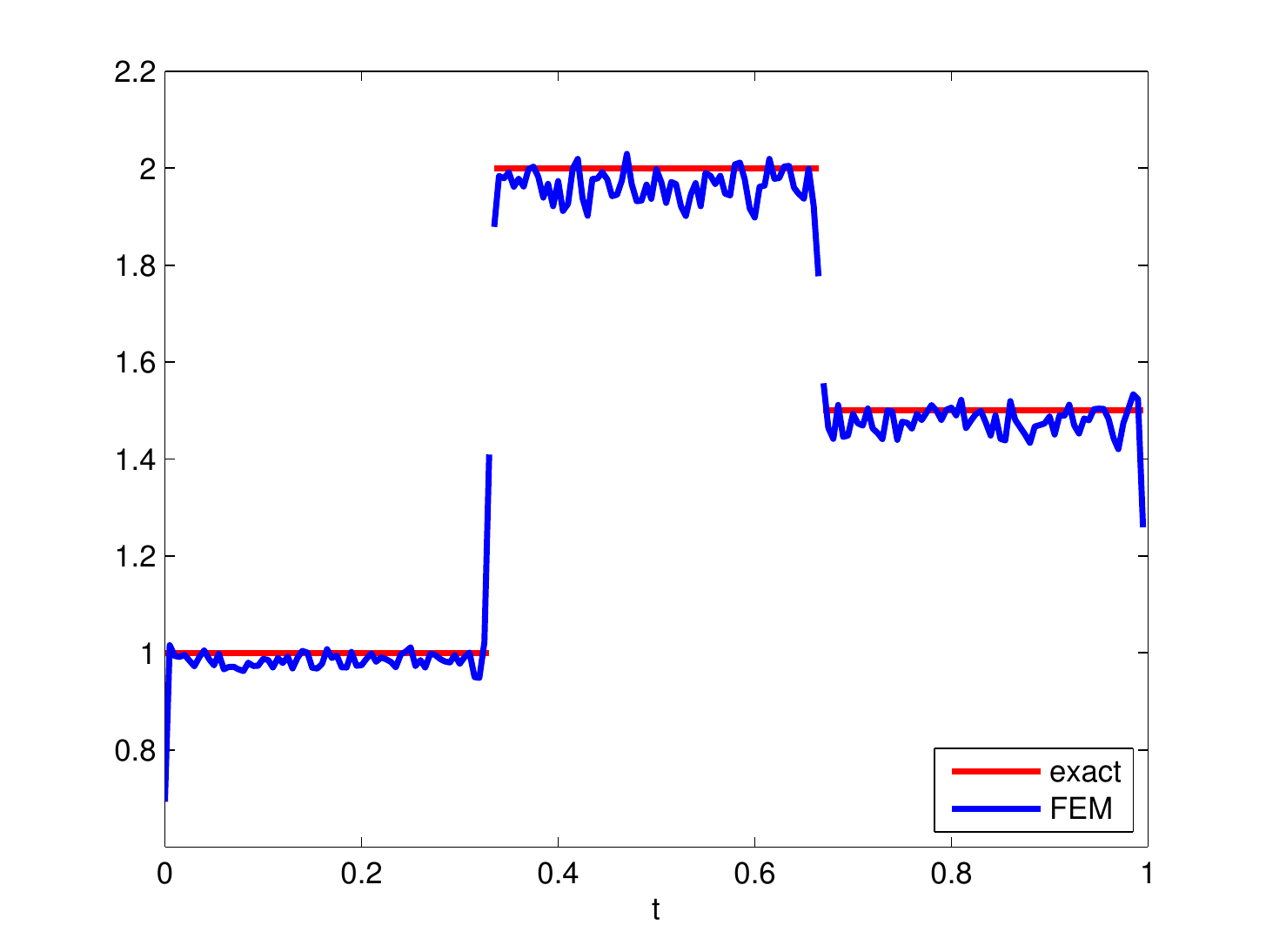}}
	\caption{Results for homogeneous model (test model 1), non-smooth  case.}
	\label{fig:homo_dis}
\end{figure}
\begin{figure}[H]
	\centering
	\subfigure[Approximation comparison for $g_1$]{
		\includegraphics[width=2.8in,height=2.1in]{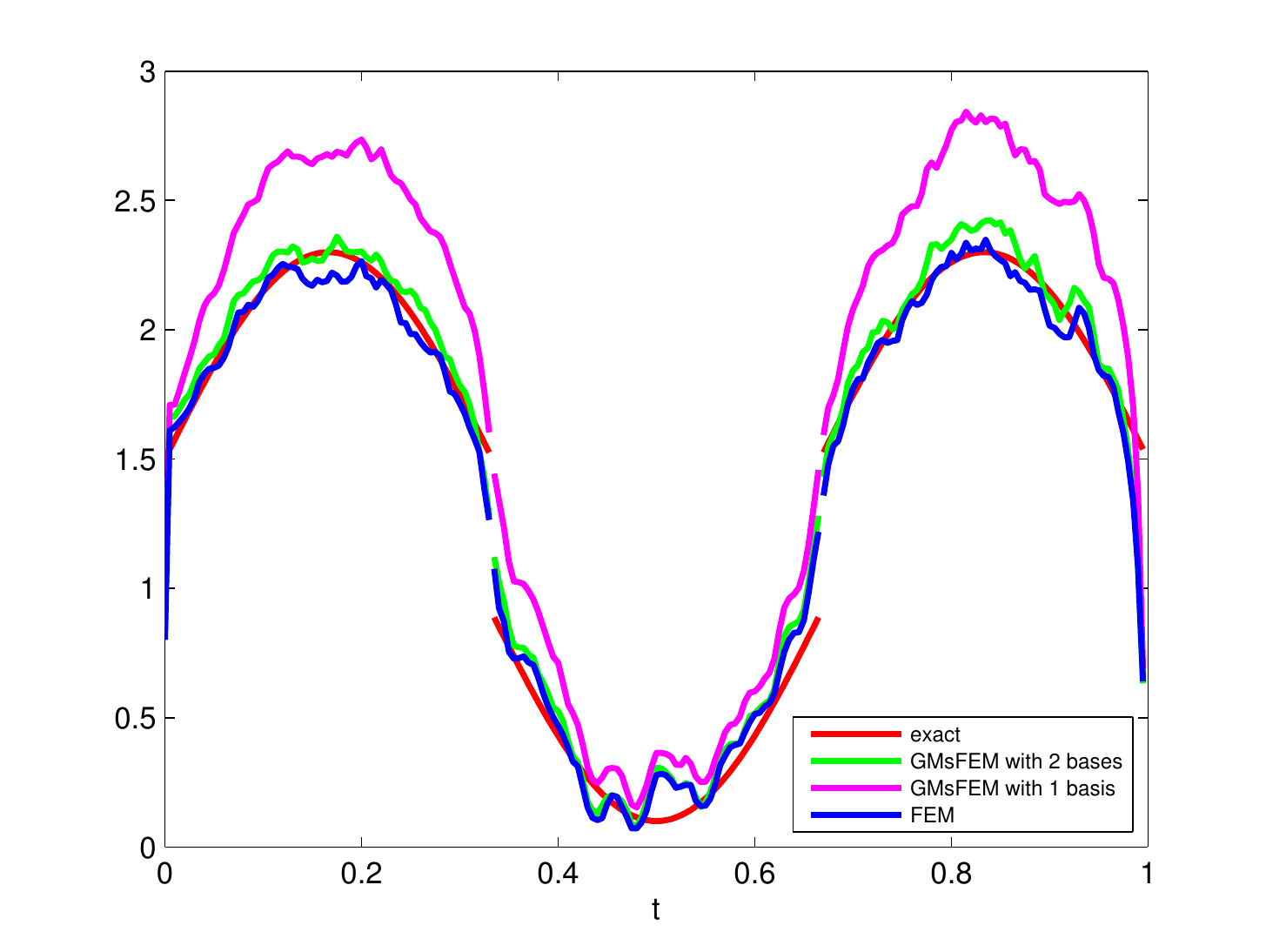}}	
	\subfigure[Approximation comparison for $|g_2|$]{
		\includegraphics[width=2.8in,height=2.1in]{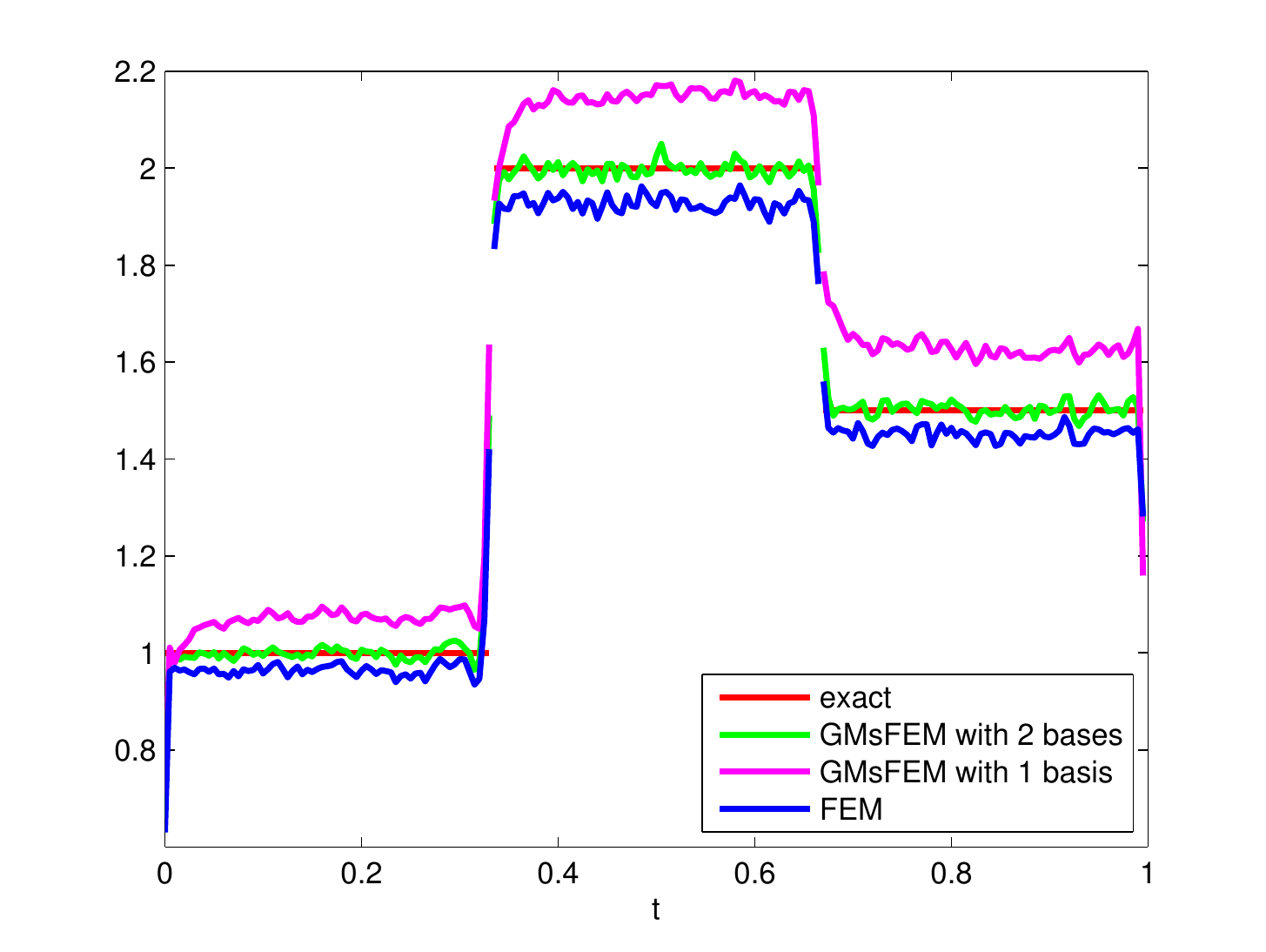}}
	\caption{Results for test model 2, non-smooth case.}
	\label{fig:het_dis}
\end{figure}

\begin{figure}[H]
	\centering
	\subfigure[Approximation comparison for $g_1$]{
		\includegraphics[width=2.8in,height=2.1in]{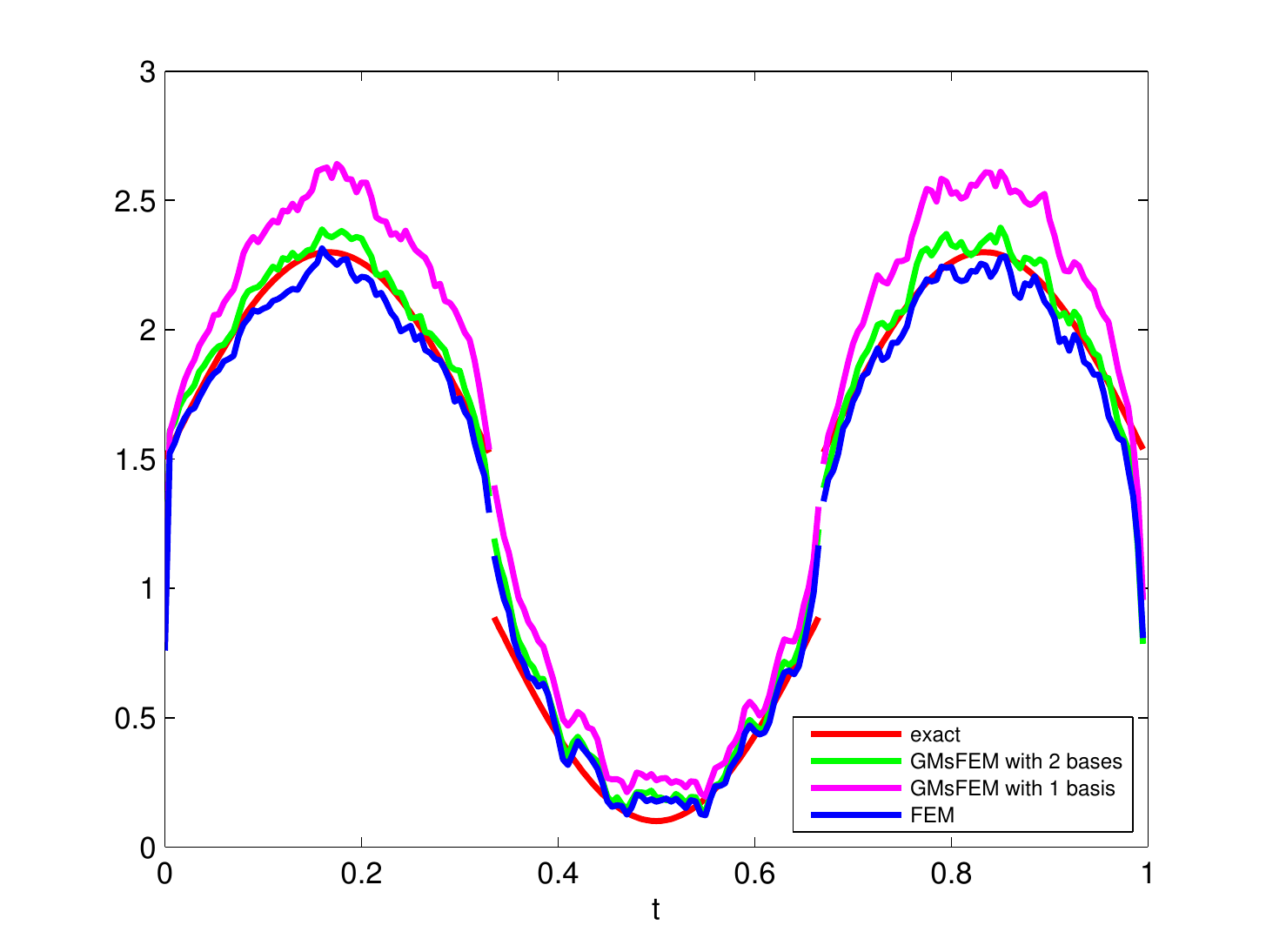}}	
	\subfigure[Approximation comparison for $|g_2|$]{
		\includegraphics[width=2.8in,height=2.1in]{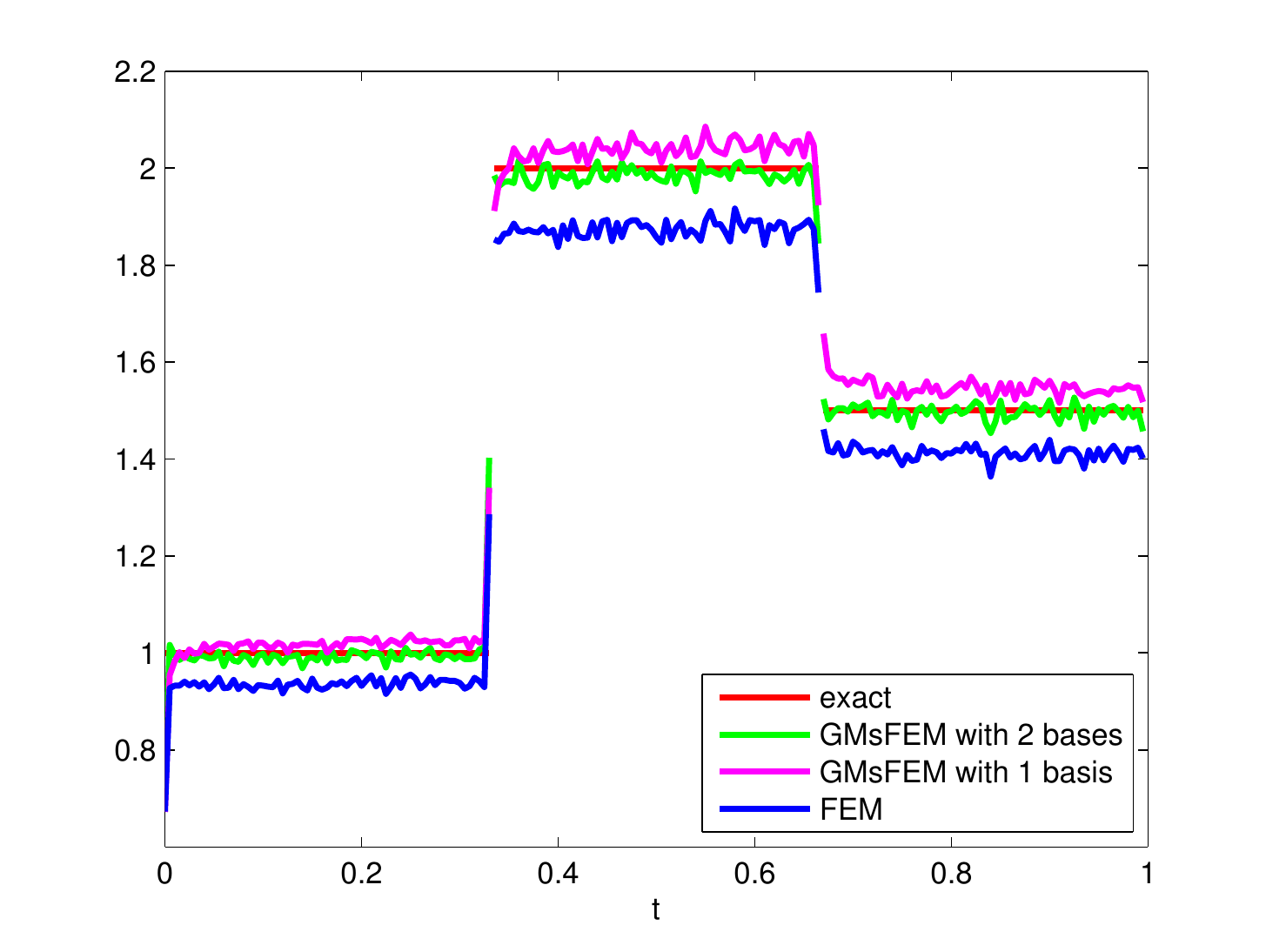}}
	\caption{Results for test model 3, non-smooth case.}
	\label{fig:cross_dis}
\end{figure}

Furthermore, the non-smooth case is also tested, 
\begin{equation*}
\begin{aligned}
g_1(t)=& (1.5+0.8\sin(3\pi t))\chi_{{}_{ t\in[0,1/3)\cup[2/3,1]}}
+(0.9+0.8\sin(3\pi t))\chi_{{}_{t\in[1/3,2/3)}},\\
g_2(t)=&\chi_{{}_{ t\in[0,1/3)}}-2\chi_{{}_{t\in[1/3,2/3)}}
+1.5\chi_{{}_{t\in[2/3,1]}}.
\end{aligned}
\end{equation*}
We present the corresponding results in Figures \ref{fig:homo_dis}-\ref{fig:cross_dis}. Again, we observe that our algorithm works well, and the GMsFEM with 2 bases is better than the FEM especially for the 
approximation of $|g_2|$, which agrees with the smooth case.

\section{Concluding remark and future work} 
This paper considers the recovery of unknown source in the stochastic fractional diffusion equation. The statistical moments of single point data $u(x_0,t,\o)$ are used, and the observation point $x_0$ is set to be out of the support of the source, which fits the practical circumstance. The restriction on $x_0$ makes the analysis more challenging. Nonetheless, the estimates of unknowns on the incomplete interval are given, and the constructed iterative algorithm works for both smooth and non-smooth cases.  

From the numerical results, one natural question for this inverse problem will be whether we can recover more information about the unknown $g_2(t)$. In this work, we can only solve $g_2^2$, or $|g_2|$, which can not describe        
$g_2$ well. This comes from the Ito formula in Lemma \ref{Ito isometry formula}, by which the term $g_2^2$ is generated. Hence, if we want to reconstruct $g_2$ further, some more complicated statistical moments need to be used, not only variance. For example, it seems that we may obtain $\pm g_2$ from the moment covariance. The corresponding investigation is one of our future work.      

\section*{Acknowledgment}
The second author was supported by Academy of Finland, grants 284715, 312110, and the Atmospheric mathematics project of University of Helsinki.

\bibliographystyle{abbrvurl} 
\bibliography{SFDE_time}

\begin{thebibliography}{10}

\bibitem{adams1992field}
E.~E. Adams and L.~W. Gelhar.
\newblock Field study of dispersion in a heterogeneous aquifer: 2. spatial
  moments analysis.
\newblock {\em Water Resources Research}, 28(12):3293--3307, 1992.

\bibitem{pu}
I.~Babu{\v{s}}ka and J.~M. Melenk.
\newblock The partition of unity method.
\newblock {\em International journal for numerical methods in engineering},
  40(4):727--758, 1997.

\bibitem{BaoChenLi:2017}
G.~Bao, C.~Chen, and P.~Li.
\newblock Inverse random source scattering for elastic waves.
\newblock {\em SIAM J. Numer. Anal.}, 55(6):2616--2643, 2017.
\newblock URL: \url{https://doi.org/10.1137/16M1088922}, \href
  {http://dx.doi.org/10.1137/16M1088922} {\path{doi:10.1137/16M1088922}}.

\bibitem{BaoYinZeng:2019}
G.~Bao, T.~Yin, and F.~Zeng.
\newblock Multifrequency iterative methods for the inverse medium scattering
  problems in elasticity.
\newblock {\em SIAM J. Sci. Comput.}, 41(4):B721--B745, 2019.
\newblock URL: \url{https://doi.org/10.1137/18M1220844}, \href
  {http://dx.doi.org/10.1137/18M1220844} {\path{doi:10.1137/18M1220844}}.

\bibitem{BarkaiMetzlerKlafter:2000}
E.~Barkai, R.~Metzler, and J.~Klafter.
\newblock From continuous time random walks to the fractional fokker-planck
  equation.
\newblock {\em Phys. Rev. E}, 61:132--138, Jan 2000.
\newblock URL: \url{https://link.aps.org/doi/10.1103/PhysRevE.61.132}, \href
  {http://dx.doi.org/10.1103/PhysRevE.61.132}
  {\path{doi:10.1103/PhysRevE.61.132}}.

\bibitem{berkowitz2006modeling}
B.~Berkowitz, A.~Cortis, M.~Dentz, and H.~Scher.
\newblock Modeling non-fickian transport in geological formations as a
  continuous time random walk.
\newblock {\em Reviews of Geophysics}, 44(2), 2006.

\bibitem{baleanu2019handbook}
D.~Bǎleanu and A.~M. Lopes.
\newblock {\em Handbook of Fractional Calculus with Applications}.
\newblock De Gruyter, 2019.

\bibitem{BouchaudGeorges:1990}
J.-P. Bouchaud and A.~Georges.
\newblock Anomalous diffusion in disordered media: Statistical mechanisms,
  models and physical applications.
\newblock {\em Physics Reports}, 195(4):127 -- 293, 1990.
\newblock URL:
  \url{http://www.sciencedirect.com/science/article/pii/037015739090099N},
  \href {http://dx.doi.org/https://doi.org/10.1016/0370-1573(90)90099-N}
  {\path{doi:https://doi.org/10.1016/0370-1573(90)90099-N}}.

\bibitem{chan2016adaptive}
H.~Y. Chan, E.~Chung, and Y.~Efendiev.
\newblock Adaptive mixed gmsfem for flows in heterogeneous media.
\newblock {\em Numerical Mathematics: Theory, Methods and Applications},
  9(4):497--527, 2016.

\bibitem{cho2017frequency}
Y.~Cho, R.~L. Gibson, and S.~Fu.
\newblock Frequency-domain reverse time migration using generalized multiscale
  forward modeling.
\newblock In {\em SEG Technical Program Expanded Abstracts 2017}, pages
  4583--4588. Society of Exploration Geophysicists, 2017.

\bibitem{chung2015mixed}
E.~T. Chung, Y.~Efendiev, and C.~S. Lee.
\newblock Mixed generalized multiscale finite element methods and applications.
\newblock {\em Multiscale Modeling \& Simulation}, 13(1):338--366, 2015.

\bibitem{chung2014adaptive}
E.~T. Chung, Y.~Efendiev, and G.~Li.
\newblock An adaptive gmsfem for high-contrast flow problems.
\newblock {\em Journal of Computational Physics}, 273:54--76, 2014.

\bibitem{gmsfem}
Y.~Efendiev, J.~Galvis, and T.~Y. Hou.
\newblock Generalized multiscale finite element methods (gmsfem).
\newblock {\em Journal of Computational Physics}, 251:116--135, 2013.

\bibitem{FengLiWang:2019}
X.~Feng, P.~Li, and X.~Wang.
\newblock An inverse random source problem for the time fractional diffusion
  equation driven by a fractional brownian motion.
\newblock {\em arXiv preprint arXiv:1908.03666}, 2019.

\bibitem{fu2017fast}
S.~Fu and K.~Gao.
\newblock A fast solver for the helmholtz equation based on the generalized
  multiscale finite-element method.
\newblock {\em Geophysical Journal International}, 211(2):819--835, 2017.

\bibitem{galvis2015generalized}
J.~Galvis, G.~Li, and K.~Shi.
\newblock A generalized multiscale finite element method for the brinkman
  equation.
\newblock {\em Journal of Computational and Applied Mathematics}, 280:294--309,
  2015.

\bibitem{GefenAharonyAlexander:1983}
Y.~Gefen, A.~Aharony, and S.~Alexander.
\newblock Anomalous diffusion on percolating clusters.
\newblock {\em Phys. Rev. Lett.}, 50:77--80, Jan 1983.
\newblock URL: \url{https://link.aps.org/doi/10.1103/PhysRevLett.50.77}, \href
  {http://dx.doi.org/10.1103/PhysRevLett.50.77}
  {\path{doi:10.1103/PhysRevLett.50.77}}.

\bibitem{GhoshRulandSaloUhlmann:2020}
T.~Ghosh, A.~R{\"u}land, M.~Salo, and G.~Uhlmann.
\newblock Uniqueness and reconstruction for the fractional calder{\'o}n problem
  with a single measurement.
\newblock {\em Journal of Functional Analysis}, page 108505, 2020.

\bibitem{hatano1998dispersive}
Y.~Hatano and N.~Hatano.
\newblock Dispersive transport of ions in column experiments: An explanation of
  long-tailed profiles.
\newblock {\em Water resources research}, 34(5):1027--1033, 1998.

\bibitem{hou1997}
T.~Y. Hou and X.-H. Wu.
\newblock A multiscale finite element method for elliptic problems in composite
  materials and porous media.
\newblock {\em Journal of computational physics}, 134(1):169--189, 1997.

\bibitem{HuangLiYamamoto:2019}
X.~Huang, Z.~Li, and M.~Yamamoto.
\newblock Carleman estimates for the time-fractional advection-diffusion
  equations and applications.
\newblock {\em Inverse Problems}, 35(4):045003, 36, 2019.
\newblock URL: \url{https://doi.org/10.1088/1361-6420/ab0138}, \href
  {http://dx.doi.org/10.1088/1361-6420/ab0138}
  {\path{doi:10.1088/1361-6420/ab0138}}.

\bibitem{JiangLiLiuYamamoto:2017}
D.~Jiang, Z.~Li, Y.~Liu, and M.~Yamamoto.
\newblock Weak unique continuation property and a related inverse source
  problem for time-fractional diffusion-advection equations.
\newblock {\em Inverse Problems}, 33(5):055013, 22, 2017.
\newblock URL: \url{https://doi.org/10.1088/1361-6420/aa58d1}, \href
  {http://dx.doi.org/10.1088/1361-6420/aa58d1}
  {\path{doi:10.1088/1361-6420/aa58d1}}.

\bibitem{JinLazarovZhou:2016}
B.~Jin, R.~Lazarov, and Z.~Zhou.
\newblock An analysis of the {L}1 scheme for the subdiffusion equation with
  nonsmooth data.
\newblock {\em IMA J. Numer. Anal.}, 36(1):197--221, 2016.
\newblock URL: \url{https://doi.org/10.1093/imanum/dru063}, \href
  {http://dx.doi.org/10.1093/imanum/dru063} {\path{doi:10.1093/imanum/dru063}}.

\bibitem{JinRundell:2015}
B.~Jin and W.~Rundell.
\newblock A tutorial on inverse problems for anomalous diffusion processes.
\newblock {\em Inverse Problems}, 31(3):035003, 40, 2015.
\newblock URL: \url{https://doi.org/10.1088/0266-5611/31/3/035003}, \href
  {http://dx.doi.org/10.1088/0266-5611/31/3/035003}
  {\path{doi:10.1088/0266-5611/31/3/035003}}.

\bibitem{KilbasSrivastavaTrujillo:2006}
A.~A. Kilbas, H.~M. Srivastava, and J.~J. Trujillo.
\newblock {\em Theory and applications of fractional differential equations},
  volume 204 of {\em North-Holland Mathematics Studies}.
\newblock Elsevier Science B.V., Amsterdam, 2006.

\bibitem{KlafterSilbey:1980}
J.~Klafter and R.~Silbey.
\newblock Derivation of the continuous-time random-walk equation.
\newblock {\em Phys. Rev. Lett.}, 44:55--58, Jan 1980.
\newblock URL: \url{https://link.aps.org/doi/10.1103/PhysRevLett.44.55}, \href
  {http://dx.doi.org/10.1103/PhysRevLett.44.55}
  {\path{doi:10.1103/PhysRevLett.44.55}}.

\bibitem{LaiLinRuland:2019}
R.-Y. Lai, Y.-H. Lin, and A.~R{\"u}land.
\newblock The calder$\backslash$'on problem for a space-time fractional
  parabolic equation.
\newblock {\em arXiv preprint arXiv:1905.08719}, 2019.

\bibitem{Levenberg:1944}
K.~Levenberg.
\newblock A method for the solution of certain non-linear problems in least
  squares.
\newblock {\em Quarterly of applied mathematics}, 2(2):164--168, 1944.

\bibitem{LiKianSoccorsi:2019}
Z.~Li, Y.~Kian, and E.~Soccorsi.
\newblock Initial-boundary value problem for distributed order time-fractional
  diffusion equations.
\newblock {\em Asymptot. Anal.}, 115(1-2):95--126, 2019.
\newblock URL: \url{https://doi.org/10.3233/asy-191532}, \href
  {http://dx.doi.org/10.3233/asy-191532} {\path{doi:10.3233/asy-191532}}.

\bibitem{LiLiuYamamoto:2015}
Z.~Li, Y.~Liu, and M.~Yamamoto.
\newblock Initial-boundary value problems for multi-term time-fractional
  diffusion equations with positive constant coefficients.
\newblock {\em Appl. Math. Comput.}, 257:381--397, 2015.
\newblock URL: \url{https://doi.org/10.1016/j.amc.2014.11.073}, \href
  {http://dx.doi.org/10.1016/j.amc.2014.11.073}
  {\path{doi:10.1016/j.amc.2014.11.073}}.

\bibitem{LiuWenZhang:2019}
C.~Liu, J.~Wen, and Z.~Zhang.
\newblock Reconstruction of the time-dependent source term in a stochastic
  fractional diffusion equation.
\newblock {\em arXiv preprint arXiv:1911.00304}, 2019.

\bibitem{LiuRundellYamamoto:2016}
Y.~Liu, W.~Rundell, and M.~Yamamoto.
\newblock Strong maximum principle for fractional diffusion equations and an
  application to an inverse source problem.
\newblock {\em Fract. Calc. Appl. Anal.}, 19(4):888--906, 2016.
\newblock URL: \url{https://doi.org/10.1515/fca-2016-0048}, \href
  {http://dx.doi.org/10.1515/fca-2016-0048} {\path{doi:10.1515/fca-2016-0048}}.

\bibitem{LiuZhang:2017}
Y.~Liu and Z.~Zhang.
\newblock Reconstruction of the temporal component in the source term of a
  (time-fractional) diffusion equation.
\newblock {\em J. Phys. A}, 50(30):305203, 27, 2017.
\newblock URL: \url{https://doi.org/10.1088/1751-8121/aa763a}, \href
  {http://dx.doi.org/10.1088/1751-8121/aa763a}
  {\path{doi:10.1088/1751-8121/aa763a}}.

\bibitem{Luchko:2009}
Y.~Luchko.
\newblock Maximum principle for the generalized time-fractional diffusion
  equation.
\newblock {\em J. Math. Anal. Appl.}, 351(1):218--223, 2009.
\newblock URL: \url{https://doi.org/10.1016/j.jmaa.2008.10.018}, \href
  {http://dx.doi.org/10.1016/j.jmaa.2008.10.018}
  {\path{doi:10.1016/j.jmaa.2008.10.018}}.

\bibitem{mainardi2010fractional}
F.~Mainardi.
\newblock {\em Fractional calculus and waves in linear viscoelasticity}.
\newblock Imperial College Press, London, 2010.
\newblock An introduction to mathematical models.
\newblock URL: \url{http://dx.doi.org/10.1142/9781848163300}, \href
  {http://dx.doi.org/10.1142/9781848163300} {\path{doi:10.1142/9781848163300}}.

\bibitem{Marquardt:1963}
D.~W. Marquardt.
\newblock An algorithm for least-squares estimation of nonlinear parameters.
\newblock {\em J. Soc. Indust. Appl. Math.}, 11:431--441, 1963.

\bibitem{MetzlerJeonCherstvyBarka:2014}
R.~Metzler, J.-H. Jeon, A.~G. Cherstvy, and E.~Barkai.
\newblock Anomalous diffusion models and their properties: non-stationarity,
  non-ergodicity, and ageing at the centenary of single particle tracking.
\newblock {\em Physical Chemistry Chemical Physics}, 16(44):24128--24164, 2014.

\bibitem{More:1978}
J.~J. Mor\'{e}.
\newblock The {L}evenberg-{M}arquardt algorithm: implementation and theory.
\newblock In {\em Numerical analysis ({P}roc. 7th {B}iennial {C}onf., {U}niv.
  {D}undee, {D}undee, 1977)}, pages 105--116. Lecture Notes in Math., Vol. 630,
  1978.

\bibitem{Nigmatullin:1986}
R.~Nigmatullin.
\newblock The realization of the generalized transfer equation in a medium with
  fractal geometry.
\newblock {\em physica status solidi (b)}, 133(1):425--430, 1986.

\bibitem{NiuHelinZhang:2019}
P.~Niu, T.~Helin, and Z.~Zhang.
\newblock An inverse random source problem in a stochastic fractional diffusion
  equation.
\newblock {\em Inverse Problems}, 2019.

\bibitem{Bernt2003stochastic}
B.~\O~ksendal.
\newblock {\em Stochastic differential equations}.
\newblock Universitext. Springer-Verlag, Berlin, sixth edition, 2003.
\newblock An introduction with applications.
\newblock URL: \url{https://doi.org/10.1007/978-3-642-14394-6}.

\bibitem{RulandSalo:2020}
A.~R\"{u}land and M.~Salo.
\newblock The fractional {C}alder\'{o}n problem: {L}ow regularity and
  stability.
\newblock {\em Nonlinear Anal.}, 193:111529, 2020.
\newblock URL: \url{https://doi.org/10.1016/j.na.2019.05.010}, \href
  {http://dx.doi.org/10.1016/j.na.2019.05.010}
  {\path{doi:10.1016/j.na.2019.05.010}}.

\bibitem{RundellZhang:2017}
W.~Rundell and Z.~Zhang.
\newblock Fractional diffusion: recovering the distributed fractional
  derivative from overposed data.
\newblock {\em Inverse Problems}, 33(3):035008, 27, 2017.
\newblock URL: \url{https://doi.org/10.1088/1361-6420/aa573e}, \href
  {http://dx.doi.org/10.1088/1361-6420/aa573e}
  {\path{doi:10.1088/1361-6420/aa573e}}.

\bibitem{RundellZhang:2018}
W.~Rundell and Z.~Zhang.
\newblock Recovering an unknown source in a fractional diffusion problem.
\newblock {\em J. Comput. Phys.}, 368:299--314, 2018.
\newblock URL: \url{https://doi.org/10.1016/j.jcp.2018.04.046}, \href
  {http://dx.doi.org/10.1016/j.jcp.2018.04.046}
  {\path{doi:10.1016/j.jcp.2018.04.046}}.

\bibitem{SamkoKilbasMarichev:1993}
S.~G. Samko, A.~A. Kilbas, and O.~I. Marichev.
\newblock {\em Fractional integrals and derivatives}.
\newblock Gordon and Breach Science Publishers, Yverdon, 1993.
\newblock Theory and applications, Edited and with a foreword by S. M.
  Nikol'ski\u{\i}, Translated from the 1987 Russian original, Revised by the
  authors.

\bibitem{lessismore}
D.~Smyl and D.~Liu.
\newblock Less is often more: Applied inverse problems using hp-forward models.
\newblock {\em Journal of Computational Physics}, 399:108949, 2019.

\bibitem{SunLiu:2020}
C.~Sun and J.~Liu.
\newblock An inverse source problem for distributed order time-fractional
  diffusion equation.
\newblock {\em Inverse Problems}, 2020.

\bibitem{vasilyeva2020multiscale}
M.~Vasilyeva, S.~Stepanov, D.~Spiridonov, and V.~Vasil’ev.
\newblock Multiscale finite element method for heat transfer problem during
  artificial ground freezing.
\newblock {\em Journal of Computational and Applied Mathematics}, 371:112605,
  2020.

\bibitem{wharmby2013generalization}
A.~W. Wharmby and R.~L. Bagley.
\newblock Generalization of a theoretical basis for the application of
  fractional calculus to viscoelasticity.
\newblock {\em Journal of Rheology (1978-present)}, 57(5):1429--1440, 2013.

\bibitem{Zhang:2017}
Z.~Zhang.
\newblock An undetermined time-dependent coefficient in a fractional diffusion
  equation.
\newblock {\em Inverse Probl. Imaging}, 11(5):875--900, 2017.
\newblock URL: \url{https://doi.org/10.3934/ipi.2017041}, \href
  {http://dx.doi.org/10.3934/ipi.2017041} {\path{doi:10.3934/ipi.2017041}}.

\end{thebibliography}

\end{document}